\DeclareRobustCommand{\greektext}{%
  \fontencoding{LGR}\selectfont\def\encodingdefault{LGR}}
\DeclareRobustCommand{\textgreek}[1]{\leavevmode{\greektext #1}}
\numberwithin{equation}{section}
\numberwithin{figure}{section}
\theoremstyle{plain}
\newtheorem{thm}{\protect\theoremname}[section]
\theoremstyle{plain}
\newtheorem{lem}[thm]{\protect\lemmaname}
\theoremstyle{remark}
\newtheorem{rem}[thm]{\protect\remarkname}
\theoremstyle{plain}
\newtheorem{prop}[thm]{\protect\propositionname}
\def\makebbb#1{
    \expandafter\gdef\csname#1\endcsname{
        \ensuremath{\Bbb{#1}}}
}\makebbb{R}\makebbb{N}\makebbb{Z}\makebbb{C}\makebbb{H}\makebbb{E}\makebbb{H}\makebbb{P}\makebbb{B}\makebbb{Q}\makebbb{E}
\providecommand{\lemmaname}{Lemma}
\providecommand{\propositionname}{Proposition}
\providecommand{\remarkname}{Remark}
\providecommand{\theoremname}{Theorem}
\begin{document}
\title{The spherical ensemble and quasi-Monte-Carlo designs}
\author{Robert J. Berman}
\begin{abstract}
The spherical ensemble is a well-known ensemble of $N$ repulsive
points on the two-dimensional sphere, which can realized in various
ways (as a random matrix ensemble, a determinantal point process,
a Coulomb gas, a Quantum Hall state...). Here we show that the spherical
ensemble enjoys remarkable convergence properties from the point of
view of numerical integration. More precisely, it is shown that the
numerical integration rule corresponding to $N$ nodes on the two-dimensional
sphere sampled in the spherical ensemble is, with overwhelming probability,
nearly a quasi-Monte-Carlo design in the sense of Brauchart-Saff-Sloan-Womersley,
for any smoothness parameter $s\leq2.$ The key ingredient is a new
explicit sub-Gaussian concentration of measure inequality for the
spherical ensemble.
\end{abstract}

\maketitle

\section{Introduction}

How to optimally distribute $N$ points on the two-dimensional sphere?
This is a question which has a long history and appears in a wide
range of areas in pure as well as applied mathematics (see the survey
\cite{s-k} and \cite[Section 7]{b-h-s}). The notion of optimality
depends, of course, on the problem at hand. But a recurrent theme
is to distribute the configuration of points $\boldsymbol{x}_{N}:=(x_{1},...,x_{N})\in X^{N}$
so as to minimize 
\[
\left\Vert \delta_{N}(\boldsymbol{x}_{N})-d\sigma\right\Vert 
\]
on $X^{N},$ where $\left\Vert \cdot\right\Vert $ is a given given
a (semi-)norm on the space of all signed measure on the two-sphere
$X,$ $d\sigma$ denotes the standard uniform probability measure
on $X$ and $\delta_{N}(\boldsymbol{x}_{N})$ is the\emph{ empirical
measure} corresponding to $\boldsymbol{x}_{N},$ i.e. the discrete
probability measure on $X$ defined by 
\begin{equation}
\delta_{N}(x_{1},\ldots,x_{N}):=\frac{1}{N}\sum_{i=1}^{N}\delta_{x_{i}}\label{eq:emp measure}
\end{equation}
More precisely, since finding exact minimizers is usually unfeasible,
the aim is typically to distribute the $N$ points $(x_{1},...,x_{N})$
so that $\left\Vert \delta_{N}(\boldsymbol{x}_{N})-d\sigma\right\Vert $
achieves the optimal (minimal) rate as $N\rightarrow\infty$ (as discussed
in the introduction of \cite[I]{l-r-s}). Here we will be concerned
with a notion of optimality which naturally appears~in the context
of numerical integration (cubature) and quasi-Monte-Carlo integration
techniques \cite{c-f,b-s-s-w}, where the norm in question is a Sobolev
norm. 

\subsection{Background }

\subsubsection{(Quasi-)Monte-Carlo integration on cubes}

Monte-Carlo integration is a standard probabilistic technique for
numerically computing the Lebesgue integral of a given, say continuous,
function $f$ over a domain $X$ in Euclidean $\R^{d}$ (or more generally,
a Riemannian manifold $X).$ It consists in generating $N$ random
points $x_{1},...,x_{N}$ in $X,$ with respect to the uniform distribution
$dx$ on $X$ (assuming for simplicity that $X$ has unit-volume)
and approximating 
\[
\int_{X}fdx\approx\frac{1}{N}\sum_{i=1}^{N}f(x_{i})
\]
In other words, the points $x_{1},...,x_{N}$ are viewed as as independent
$\R^{d}-$valued random variables with identical distribution $dx.$
By the central limit theorem the error is of the order $\mathcal{O}(N^{-1/2})$
with high probability:
\begin{equation}
\lim_{N\rightarrow\infty}\P\left(\left|\int_{X}fdx-\frac{1}{N}\sum_{i=1}^{N}f(x_{i})\right|\geq\frac{\lambda}{N^{1/2}}\right)=1-\int_{|y|\geq\lambda}e^{-y^{2}}dx/\pi^{1/2}\label{eq:classical clt}
\end{equation}
if $f$ is normalized to have unit variance. 

The popular\emph{ Quasi-Monte-Carlo method} aims at improving the
order of the convergence, by taking $\boldsymbol{x}_{N}:=(x_{1},...,x_{N})$
to be a judiciously constructed deterministic sequence of $N-$point
configurations on $X.$ In the most commonly studied case when $X$
is the unit-cube $[0,1]^{d}$ in $\R^{d}$ there are well-known explicit
so called \emph{low-descripency sequences }(e.g. digital nets)\emph{
}constructed using the theory of uniform distribution in number theory
\cite{n}, such that 
\begin{equation}
\sup_{f\in C(X):\,V(f)\leq1}\left|\int_{X}fdx-\frac{1}{N}\sum_{i=1}^{N}f(x_{i})\right|\leq C_{d}\frac{(\log N)^{d}}{N}\label{eq:low-discr}
\end{equation}
where $V(f)$ is the\emph{ Hardy-Krause variation} of $f,$ whose
general definition is rather complicated, but for $f$ sufficiently
regular it is, when $d=2,$ given by 
\begin{equation}
V(f):=\int_{[0,1]^{2}}|\frac{\partial^{2}f}{\partial x_{1}\partial x_{2}}|dx+\int_{[0,1]}|\frac{\partial f}{\partial x_{1}}(x_{1},1)|dx_{1}+\int_{[0,1]}|\frac{\partial f}{\partial x_{2}}(1,x_{2})|dx_{2}\label{eq:V f intro}
\end{equation}
This is a consequence of the Koksma-Hlawka inequality, which is the
corner stone of the theory of quasi-Monte-Carlo integration on a cube
\cite{ko,hl,n}. 

\subsubsection{Numerical integration on manifolds}

Let us next recall the general setup for numerical integration on
manifolds, following \cite{b-c-c-g-s-t,b-s-s-w}. Let $X$ be a compact
manifold that we shall take to be two-dimensional. Given a configuration
$\boldsymbol{x}_{N}\in X^{N}$ of $N$ points on $X$ the \emph{worst-case
error} \emph{for the integration rule on $X$ with node set $\boldsymbol{x}_{N}$
with respect to the smoothness parameter $s\in]1,\infty[$ }is defined
by

\begin{equation}
\text{wce }(\boldsymbol{x}_{N};s):=\sup_{f:\,\left\Vert f\right\Vert _{H^{s}(X)}\leq1}\left|\int_{X}fd\sigma-\frac{1}{N}\sum_{i=1}^{N}f(x_{i})\right|\label{eq:def of wce intro}
\end{equation}
where $d\sigma_{g}$ denotes the normalized volume form defined by
$g$ and $\left\Vert f\right\Vert _{H^{s}(X)}$ denotes the norm in
the Sobolev space $H^{s}(X)$ of functions with $s$ fractional derivatives
in $L^{2}(X).$ In other words, 
\[
\text{wce }(\boldsymbol{x}_{N};s)=\left\Vert \delta_{N}(\boldsymbol{x}_{N})-d\sigma_{g}\right\Vert _{H_{0}^{-s}(X)}^{2},
\]
 where $\delta_{N}(\boldsymbol{x}_{N})$ is the empirical measure
\ref{eq:emp measure} and $H_{0}^{-s}(X)$ denotes the Sobolev space
of all mean zero distributions on $X,$ endowed with the Hilbert norm
which is dual to the smoothness parameter $s\in]1,\infty[$ (see Section
\ref{subsec:Sobolev-spaces-and}). The role of the Hardy-Krause variation
norm \ref{eq:V f intro} on a Euclidean square will in the present
two-dimensional Riemannian setting be played by the Sobolev norm with
smoothness parameter $2:$ 
\[
\left\Vert f\right\Vert _{H^{2}(X)}:=\left(\int_{X}|\Delta_{g}f|^{2}dV_{g}\right)^{1/2},
\]
 where $\Delta_{g}$ denotes Laplace operator on $C^{\infty}(X).$
The worst case error $\text{wce }(\boldsymbol{x}_{N};s)$ is also
called the \emph{generalized discrepancy }\cite{c-f}\emph{ }because
of the similarity with the Koksma-Hlawka inequality on a cube. A sequence
$\boldsymbol{x}_{N}\in X^{N}$ is said to be of \emph{convergence
order $\mathcal{O}(N^{-\kappa})$ with respect to the smoothness parameter
$s$} if 
\[
\text{wce }(\boldsymbol{x}_{N};s)\leq\mathcal{O}(N^{-\kappa})
\]
The optimal convergence order is $\mathcal{O}(N^{-s/2}).$ More precisely,
by \cite[Thm 2.14]{b-c-c-g-s-t}, there exists a positive constant
$c(s)$ such that for any sequence $\boldsymbol{x}_{N}\in X^{N}$
\begin{equation}
\text{wce }(\boldsymbol{x}_{N};s)\geq c(s)N^{-s/2}.\label{eq:wce for speci}
\end{equation}

\subsubsection{Quasi-Monte Carlo designs on the two-sphere}

Consider now the case when $X$ is the two-dimensional sphere endowed
with the Riemannian metric induced from the standard embedding of
$X$ as the unit-sphere in Euclidean $\R^{3}.$ We will denote by
$d\sigma$ the probability measure on $X$ obtained by normalizing
the area form of $g.$ Following \cite{b-s-s-w} a sequence of $N-$point
configurations $\boldsymbol{x}_{N}\in X^{N}$ is said to be a sequence
of \emph{Quasi-Monte-Carlo designs (QMC) wrt the smoothness parameter}
$s\in]1,\infty[,$ if the corresponding worst case errors $\text{wce }(\boldsymbol{x}_{N};s)$
have optimal convergence order, i.e. if 
\[
\text{wce }(\boldsymbol{x}_{N};s)=\mathcal{O}(N^{-s/2})
\]
In particular, this convergence is faster than the one offered by
the standard Monte-Carlo method. Indeed, as recalled above, Monte-Carlo
integration gives, with high probability, an error of the order $N^{-1/2}$
for a fixed function $f,$ even if the function is smooth. 

The notion of a QMC design is modeled on the influential notion of
a \emph{spherical t-design} $\boldsymbol{x}_{N}\in X^{N},$ introduced
in \cite{d-g-s}. In fact, as shown in \cite[Thm 6]{b-s-s-w}, it
follows from the solution of the Korevaar-Meyers conjecture in \cite{b-r-v},
that there exists a sequence of spherical $t-$designs $X^{N}$ with
$t$ of the order $N^{1/2},$ which is a QMC design for any $s\in]1,\infty[.$
Moreover, for a fixed $s\in]1,2[$ reproducing kernel techniques reveal
that any sequence of maximizers $\boldsymbol{x}_{N}(s)\in X^{N}$
of the generalized sum
\[
\sum_{i,j\leq N}\left|x_{i}-x_{j}\right|^{2s-2}
\]
is a QMC design wrt the smoothness parameter $s$ (see \cite{b-s-s-w}). 

However, all the sequences of QMC designs discussed above are non-explicit
for $N$ large. Moreover, it is very challenging to approximate them
numerically for large $N,$ due, in particular, to an abundance of
local minima of the corresponding functionals to be minimized on $X^{N}$
\cite{g-p,w}. One is thus lead to wonder if probabilistic methods
can be used to improve the convergence order of the standard Monte-Carlo
method by taking the points $x_{1},...,x_{N}$ on the sphere to be
appropriately correlated, as in repulsive particle systems? A natural
class of such point processes is offered by the class of \emph{determinantal
point processes}, whose utility for Monte-Carlo type numerical integration
was advocated in \cite{b-h}. The main aim of the present work is
to show that the a particular determinantal point process on the two-sphere
known as \emph{the} \emph{spherical ensemble} enjoys quite remarkable
converge properties from the point of view of numerical integration,
for any smoothness parameter $s\in]1,2].$

\subsection{\label{subsec:Main-results-for}Main results for the spherical ensemble}

The spherical ensemble first appeared as a \emph{Coulomb gas, }also
known as a one-component plasma, in the physics literature (see the
monograph \cite{fo} and references therein). We recall that the Coulomb
gas on the two-sphere $X$ with $N$ particles, at inverse temperature
$\beta,$ is defined by the following symmetric probability measure
on $X^{N}:$ 
\begin{equation}
d\P_{N,\beta}:=\frac{1}{Z_{N,\beta}}e^{-\beta E^{(N)}}d\sigma^{\otimes N},\,\,\,\,E^{(N)}:=-\sum_{i\neq j\leq N}\frac{1}{2}\log\left|x_{i}-x_{j}\right|\label{eq:def of dP and E N intro}
\end{equation}
where $X$ has been embedded as the unit-sphere in Euclidean $\R^{3}.$
It represents the microscopic state of $N$ unit charge particles
in thermal equilibrium on $X,$ interacting by the Coulomb energy
$E^{(N)},$ subject to a neutralizing uniform back-ground charge.
More precisely, the spherical ensemble $(X,\P_{N})$ coincides with
Coulomb gas on the sphere at the particular inverse temperature $\beta=2,$
for which the Coulomb gas becomes a determinantal point process \cite{fo,h-k-p}.
An elegant random matrix realization of the spherical ensemble was
exhibited in \cite{kr}. Consider two rank $N$ complex matrices $A$
and $B$ and take their entries to be standard normal variables. Then
the spherical ensemble coincides with the random point process defined
by the eigenvalues of $AB^{-1}$ in the complex plane, scaled by $1/N^{1/2}$
and mapped to the two-sphere, using stereographic projection. 

In the present work it is shown that a random $N-$point configuration
$\boldsymbol{x}_{N}$ in the spherical ensemble is, with overwhelming
probability, nearly a Quasi-Monte Carlo design for any $s\in]1,2]:$ 
\begin{thm}
\label{thm:main intro}Consider the spherical ensemble with $N$ particles.
There exists a constant $C$ such that for any given $R$ in $[(\log N)^{-1/2},N(\log N)^{-1/2}]$
\[
\P_{N}\left(\text{wce }(\boldsymbol{x}_{N};2)\leq R\frac{(\log N)^{1/2}}{N}\right)\geq1-\frac{1}{N^{R^{2}/C-C}}
\]
As a consequence, for any $s\in]1,2],$ there exists a constant $C(s)$
such that for any given $R$ in $[(\log N)^{-1/2},N(\log N)^{-1/2}]$
\[
\P_{N}\left(\text{wce}(\boldsymbol{x}_{N};s)\leq R^{s/2}\frac{(\log N)^{s/4}}{N^{s/2}}\right)\geq1-\frac{1}{N^{R^{2}/C(s)-C}}.
\]
\end{thm}

That the estimate in the case $s=2$ implies the one for $s<2$ follows
from \cite[Lemma 26]{b-s-s-w}. Note that the worst case error in
the case $s=2$ is similar to the worst case error of low-discrepancy
sequences on the cube (formula \ref{eq:low-discr}). It should, however,
be stressed that on the two-sphere there are no explicitly constructed
sequences $(\boldsymbol{x}_{N})$ saturating the optimal rate $\text{wce }(\boldsymbol{x}_{N};2)=\mathcal{O}(N^{-1}),$
even if logarithmic factors are included (see the discussion in Section
\ref{subsec:Explixit-sequences-for}). 

An important feature of the proof of the previous theorem is that
it yields attractive values on the constants in question. Indeed,
for any $\eta>2/\log N$ the following explicit bound is obtained:
\begin{equation}
\P_{N}\left(\text{wce }(\boldsymbol{x}_{N};2)\leq\frac{e}{(8\pi)^{1/2}}\frac{\left((1+\eta)\log N\right)^{1/2}}{N}\right)\geq1-\frac{(2e\eta\log N)^{2}}{N^{\eta}},\label{eq:explicit intro}
\end{equation}
where $\log N$ denotes the natural logarithm, assuming that $N\geq1000$
(otherwise, Euler's number $e$ appearing in the left hand side has
to be replaced by a slightly larger constant). For example, when $N=1000$
this yields the worst-case-error bound $\text{wce }(\boldsymbol{x}_{N};2)<0.0025$
with more than $99\%$ confidence (by taking $\eta=2)$. Moreover,
$N=10000$ yields $\text{wce }(\boldsymbol{x}_{N};2)<0.0003$ with
$99.99\%$ confidence.

The previous theorem should be compared with the conjecture in \cite{b-s-s-w},
supported by numerical simulations, saying that minimizers $\boldsymbol{x}_{N}$
of the logarithmic energy $E^{(N)}$ (formula \ref{eq:def of dP and E N intro})
are QMC designs for $s\in]1,3].$ However, a practical advantage of
the spherical ensemble is that it can be simply generated by employing
$\mathcal{O}(N^{3})$ elementary operations (using its random matrix
representations), while no polynomial time algorithm for constructing
near-minimizers of $E^{(N)}$ is known \cite[Problem 7]{sm}. Concerning
the sharpness of the inequalities in the previous theorem we note
that the restriction to $s\leq2$ is necessary, as follows from formula
\ref{eq:clt of r-v} below. Moreover, the power $1/2$ of $\log N$
appearing in the inequalities for $s=2$ can be expected to be optimal. 

Theorem \ref{thm:main intro} will be deduced from a new concentration
of measure inequality in Sobolev spaces (Theorem \ref{thm:concentr of measure for spherical}
below), which, in turn, will follow from the following bound on the
moment generating function of the square of the random variable $\text{wce }(\boldsymbol{x}_{N};s).$
\begin{thm}
\label{thm:moment bound for wce}For any $\epsilon>0$ and $\alpha\in]0,4\pi[$
\[
\E_{N}\left(e^{\alpha N^{2}\left(\text{wce }(\boldsymbol{x}_{N};2+\epsilon)\right)^{2}}\right)\leq\left(\det(I-\frac{\alpha}{4\pi}\Delta_{g}^{-(1+\epsilon)})\right)^{-1/2}<\infty
\]
 where $\Delta_{g}$ denotes the Laplace operator on $X$ and $\det(I-\lambda\Delta_{g}^{-(1+\epsilon)})$
is the Fredholm (spectral) determinant of its fractional power $\Delta_{g}^{-(1+\epsilon)}$
(see Section \ref{subsec:Sobolev-spaces-and}). 
\end{thm}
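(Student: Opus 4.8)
The plan is to diagonalize the quadratic random variable $N^{2}\,\text{wce}(\boldsymbol{x}_{N};2+\epsilon)^{2}$, to linearize its exponential by a Gaussian (Hubbard--Stratonovich) substitution, to apply the determinantal Laplace-transform formula, and then to estimate the resulting random linear statistic uniformly in $N$. First recall from Section~\ref{subsec:Sobolev-spaces-and} that, with $\{\phi_{k}\}_{k\geq 0}$ an $L^{2}(dV_{g})$-orthonormal basis of eigenfunctions of $\Delta_{g}$ ($\phi_{0}$ constant, eigenvalues $0=\lambda_{0}<\lambda_{1}\leq\lambda_{2}\leq\cdots$), one has $\text{wce}(\boldsymbol{x}_{N};s)^{2}=\|\delta_{N}-d\sigma\|_{H^{-s}_{0}(X)}^{2}=\sum_{k\geq 1}\lambda_{k}^{-s}\big|\langle\delta_{N}-d\sigma,\phi_{k}\rangle\big|^{2}$. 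Since the spherical ensemble is invariant under the isometry group of the sphere, its one-point intensity is constant and $\E_{N}[\delta_{N}]=d\sigma$; hence $\langle\delta_{N}-d\sigma,\phi_{k}\rangle=N^{-1}L_{k}$ with $L_{k}:=\sum_{i=1}^{N}\phi_{k}(x_{i})$ a mean-zero linear statistic, so that $\alpha N^{2}\text{wce}(\boldsymbol{x}_{N};2+\epsilon)^{2}=\alpha\sum_{k\geq 1}\lambda_{k}^{-(2+\epsilon)}L_{k}^{2}$.

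Next introduce an auxiliary centered Gaussian field $F:=\sqrt{2\alpha}\sum_{k\geq 1}g_{k}\,\lambda_{k}^{-(2+\epsilon)/2}\phi_{k}$, with $(g_{k})$ i.i.d.\ standard real Gaussians independent of the point process; since $2+\epsilon>1$ this series converges almost surely in $H^{1+\epsilon/2}(X)\hookrightarrow C(X)$, so $F$ is a.s.\ a bounded continuous function. Applying the elementary identity $e^{at^{2}}=\E_{g}\big[e^{\sqrt{2a}\,t\,g}\big]$ ($a>0$) mode by mode---equivalently, the Gaussian representation of $e^{a\|\cdot\|^{2}}$ on $H^{-(2+\epsilon)}_{0}(X)$---gives, for each fixed configuration, $e^{\alpha N^{2}\text{wce}(\boldsymbol{x}_{N};2+\epsilon)^{2}}=\E_{\mathbf g}\big[e^{\sum_{i}F(x_{i})}\big]$. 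Taking $\E_{N}$, using Tonelli (all terms positive) and then the standard formula for the Laplace transform of a linear statistic of a determinantal point process,
\[
\E_{N}\Big(e^{\alpha N^{2}\text{wce}(\boldsymbol{x}_{N};2+\epsilon)^{2}}\Big)=\E_{\mathbf g}\Big[\E_{N}\big(e^{\sum_{i}F(x_{i})}\big)\Big]=\E_{\mathbf g}\Big[\det\big(I+(e^{F}-1)K_{N}\big)\Big],
\]
where $K_{N}$ is the Hermitian, trace-class correlation kernel of the spherical ensemble, with $0\leq K_{N}\leq I$ (in fact $K_{N}$ is the orthogonal projection onto the $N$-dimensional weighted polynomial space underlying the random-matrix realization).

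The heart of the argument---and the main obstacle---is the bound, valid for \emph{every} real mean-zero $F\in C^{\infty}(X)$ and \emph{uniformly in $N$},
\[
\det\big(I+(e^{F}-1)K_{N}\big)=\E_{N}\big(e^{\sum_{i}F(x_{i})}\big)\ \leq\ \exp\!\Big(\tfrac{1}{16\pi}\int_{X}|\nabla_{g}F|^{2}\,dV_{g}\Big),
\]
i.e.\ a smooth linear statistic of the spherical ensemble is, in the Laplace-transform sense, dominated by a centered Gaussian whose variance is the asymptotic one-component-plasma fluctuation variance $\tfrac{1}{8\pi}\|\nabla_{g}F\|^{2}_{L^{2}(dV_{g})}$. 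I would prove this by expanding $\log\det(I+(e^{F}-1)K_{N})=\mathrm{Tr}\log(I+(e^{F}-1)K_{N})$ as a cumulant series in $(e^{F}-1)K_{N}$: the first-order term $\mathrm{Tr}\big((e^{F}-1)K_{N}\big)=\tfrac{N}{4\pi}\int_{X}(e^{F}-1-F)\,dV_{g}$ is of size $O(N)$ (here $\int F\,d\sigma=0$ and $K_{N}(x,x)\,dV_{g}=N\,d\sigma$), but it is exactly compensated by the matching part of the second-order term once one invokes the reproducing identity $\int_{X}|K_{N}(x,y)|^{2}\,dV_{g}(y)=K_{N}(x,x)=N/4\pi$; what survives is governed by the finite-$N$ variance, for which the explicit form of the spherical-ensemble (weighted Bergman) kernel and its Gaussian off-diagonal decay yield $\mathrm{Var}_{N}\big(\sum_{i}F(x_{i})\big)\leq\tfrac{1}{8\pi}\|\nabla_{g}F\|^{2}$, while the cumulants of order $\geq 3$ contribute with the favourable sign (or are themselves dominated by this Dirichlet energy). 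This is the step that exploits the specific structure of the spherical ensemble rather than mere determinantality, it is where essentially all the work lies, and it is what fixes the sharp constants in Theorem~\ref{thm:main intro}.

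Finally, insert this estimate into the previous display and evaluate the remaining Gaussian expectation. Since $\Delta_{g}\geq 0$ and $\int_{X}|\nabla_{g}F|^{2}\,dV_{g}=\langle F,\Delta_{g}F\rangle=2\alpha\sum_{k\geq 1}\lambda_{k}^{-(1+\epsilon)}g_{k}^{2}$, and $\E[e^{t g^{2}}]=(1-2t)^{-1/2}$ for $t<1/2$,
\[
\E_{\mathbf g}\Big[\exp\!\big(\tfrac{1}{16\pi}\textstyle\int_{X}|\nabla_{g}F|^{2}\,dV_{g}\big)\Big]=\prod_{k\geq 1}\Big(1-\tfrac{\alpha}{4\pi}\lambda_{k}^{-(1+\epsilon)}\Big)^{-1/2}=\det\!\big(I-\tfrac{\alpha}{4\pi}\Delta_{g}^{-(1+\epsilon)}\big)^{-1/2}.
\]
The infinite product converges (as $\sum_{k}\lambda_{k}^{-(1+\epsilon)}<\infty$) and all its factors are positive provided $\tfrac{\alpha}{4\pi}<\lambda_{1}^{1+\epsilon}$, which holds under the hypothesis $\alpha\in\,]0,4\pi[$ since $\lambda_{1}>1$; in particular the right-hand side is finite. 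Chaining the three displays gives the asserted inequality. The only non-routine ingredient is the third step; the remainder is soft (spectral theory, the Hubbard--Stratonovich identity, the determinantal Laplace-transform formula, and a Gaussian moment computation).
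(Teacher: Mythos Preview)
Your overall architecture coincides with the paper's: expand $\|\delta_N-d\sigma\|_{H^{-(2+\epsilon)}_0}^2$ spectrally, linearize $e^{\alpha\|\cdot\|^2}$ by a Gaussian (your Hubbard--Stratonovich is exactly the content of the paper's Lemma~\ref{lem:ineq for exp q in finite dim}), feed in a sub-Gaussian bound for linear statistics, and evaluate the resulting Gaussian integral to obtain the Fredholm determinant. So steps one, two and four are essentially the paper's proof in different clothing, and your arithmetic in those steps is correct (in particular, the constant $1/16\pi$ you need is the right one, consistent with the asymptotic variance $\tfrac{1}{8\pi}\|\nabla F\|_{L^2}^2$).

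The genuine gap is your step three. In the paper the inequality
\[
\E_N\!\big(e^{\sum_i w(x_i)}\big)\ \leq\ \exp\!\Big(\tfrac{1}{16\pi}\int_X|\nabla_g w|^2\,dV_g\Big)
\]
(for mean-zero $w$, uniformly in $N$) is not proved but \emph{imported} as a black box from \cite{berm3}, where it is established as a multi-point sharp Moser--Trudinger/Onofri inequality for the spherical ensemble; the paper's Proposition~\ref{prop:sub-gaussian ineq for trunc} is nothing more than a restatement of that result. Your proposed route---expand $\log\det(I+(e^F-1)K_N)$ in cumulants, cancel the $O(N)$ piece of the first cumulant against part of the second via the reproducing identity, and hope the remainder has a favourable sign---does not deliver this. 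The cancellation you describe is real and standard in CLT proofs for determinantal processes, but (i) what survives after the cancellation is the finite-$N$ variance plus higher cumulants, and there is no a~priori reason the cumulants of order $\geq 3$ are nonpositive or dominated by the Dirichlet energy; (ii) even for the variance, the inequality $\mathrm{Var}_N(\sum_i F(x_i))\leq \tfrac{1}{8\pi}\|\nabla F\|_{L^2}^2$ with the \emph{sharp} constant for every finite $N$ is itself nontrivial and is not a consequence of Gaussian off-diagonal decay of $K_N$ alone. The proof in \cite{berm3} does not go via cumulants; it exploits the explicit Bergman-kernel/Toeplitz structure of the spherical ensemble together with convexity (the $N=1$ case being precisely Onofri's inequality), and this is what pins down the exact constant that makes the range $\alpha\in]0,4\pi[$ and the factor $\alpha/4\pi$ in the Fredholm determinant come out right.

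In short: your reduction to the sub-Gaussian bound is the paper's proof; your proposed proof of the sub-Gaussian bound itself is a sketch that, as written, does not close, and the actual argument (in \cite{berm3}) is of a different nature than a cumulant expansion.
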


In fact, this an asymptotic equality, as $N\rightarrow\infty$ (as
will be shown elsewhere \cite{berm17}). Combining Theorem \ref{thm:moment bound for wce}
with some spectral theory the following concentration of measure inequality
for the spherical ensemble is obtained:
\begin{thm}
\label{thm:concentr of measure for spherical}There exist explicit
constants $A_{1},A_{2}$ and $A_{3}$ such that for any positive integer
$N$ and $\epsilon>0$ 
\[
\P_{N}\left(\left\Vert \delta_{N}-d\sigma\right\Vert _{H_{0}^{-(2+\epsilon)}(X)}>\delta\right)\leq e^{-A_{1}N^{2}\delta^{2}+\frac{A_{2}}{\epsilon}+A_{3}}.
\]
\end{thm}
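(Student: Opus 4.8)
The plan is to combine a Chernoff-type estimate with the moment bound of Theorem~\ref{thm:moment bound for wce} and with the explicit knowledge of the spectrum of the Laplacian on the round two-sphere.

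\emph{Step 1: reduction to a Fredholm determinant.} Since $\text{wce}(\boldsymbol{x}_{N};2+\epsilon)=\left\Vert \delta_{N}-d\sigma\right\Vert _{H_{0}^{-(2+\epsilon)}(X)}$, the event $\{\left\Vert \delta_{N}-d\sigma\right\Vert _{H_{0}^{-(2+\epsilon)}}>\delta\}$ coincides with $\{\text{wce}(\boldsymbol{x}_{N};2+\epsilon)^{2}>\delta^{2}\}$. Applying Markov's inequality to the nonnegative random variable $e^{\alpha N^{2}\text{wce}(\boldsymbol{x}_{N};2+\epsilon)^{2}}$, for any $\alpha\in]0,4\pi[$, and then invoking Theorem~\ref{thm:moment bound for wce},
\[
\P_{N}\!\left(\left\Vert \delta_{N}-d\sigma\right\Vert _{H_{0}^{-(2+\epsilon)}}>\delta\right)\le e^{-\alpha N^{2}\delta^{2}}\,\E_{N}\!\left(e^{\alpha N^{2}\text{wce}(\boldsymbol{x}_{N};2+\epsilon)^{2}}\right)\le e^{-\alpha N^{2}\delta^{2}}\Big(\det\!\big(I-\tfrac{\alpha}{4\pi}\Delta_{g}^{-(1+\epsilon)}\big)\Big)^{-1/2}.
\]
It therefore remains to bound $-\tfrac{1}{2}\log\det(I-\tfrac{\alpha}{4\pi}\Delta_{g}^{-(1+\epsilon)})$ by an expression of the form $A_{2}/\epsilon+A_{3}$, uniformly in $\epsilon>0$, after fixing a suitable $\alpha$.

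\emph{Step 2: the spectrum of $\Delta_{g}$ on the sphere.} On the round two-sphere the Laplacian restricted to mean-zero functions has eigenvalues $\ell(\ell+1)$ with multiplicity $2\ell+1$, $\ell=1,2,\dots$, so with $\lambda:=\alpha/(4\pi)\in]0,1[$ the Fredholm determinant factorizes as $\det(I-\lambda\Delta_{g}^{-(1+\epsilon)})=\prod_{\ell\ge1}\big(1-\lambda(\ell(\ell+1))^{-(1+\epsilon)}\big)^{2\ell+1}$. Each factor $x_{\ell}:=\lambda(\ell(\ell+1))^{-(1+\epsilon)}$ satisfies $x_{\ell}\le x_{1}=\lambda\,2^{-(1+\epsilon)}<\tfrac{1}{2}$, so the elementary inequality $-\log(1-x)\le 2x$ on $[0,\tfrac{1}{2}]$ yields, using $2\ell+1\le 3\ell$ and $\ell(\ell+1)\ge\ell^{2}$,
\[
-\tfrac{1}{2}\log\det\!\big(I-\lambda\Delta_{g}^{-(1+\epsilon)}\big)\le\lambda\sum_{\ell=1}^{\infty}(2\ell+1)\,(\ell(\ell+1))^{-(1+\epsilon)}\le 3\lambda\sum_{\ell=1}^{\infty}\ell^{-1-2\epsilon}=3\lambda\,\zeta(1+2\epsilon).
\]

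\emph{Step 3: extracting the $1/\epsilon$ singularity and choosing $\alpha$.} The integral comparison $\zeta(1+2\epsilon)=\sum_{n\ge1}n^{-1-2\epsilon}\le 1+\int_{1}^{\infty}t^{-1-2\epsilon}\,dt=1+\tfrac{1}{2\epsilon}$ isolates the simple pole of $\zeta$ at $1$, and combining the three steps gives
\[
\P_{N}\!\left(\left\Vert \delta_{N}-d\sigma\right\Vert _{H_{0}^{-(2+\epsilon)}}>\delta\right)\le\exp\!\left(-\alpha N^{2}\delta^{2}+\tfrac{3\alpha}{8\pi}\cdot\tfrac{1}{\epsilon}+\tfrac{3\alpha}{4\pi}\right),
\]
so one may take, for instance, $\alpha=2\pi$ and read off $A_{1}=1/(2\pi)$, $A_{2}=3/4$, $A_{3}=3/2$ (letting $\alpha\uparrow4\pi$ trades a better $A_{1}$ against larger $A_{2},A_{3}$). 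I do not anticipate a genuine obstacle here: the analytic substance is entirely contained in Theorem~\ref{thm:moment bound for wce}, and the only points requiring care are to verify that the bound on the log-determinant is truly uniform in $\epsilon$ --- i.e.\ that the sole blow-up as $\epsilon\to0^{+}$ is the simple pole of $\zeta$, captured by the $\tfrac{1}{2\epsilon}$ term --- and, if the sharpest constants are desired, to carry out the Chernoff optimization over $\alpha\in]0,4\pi[$, the upper endpoint being precisely where the Fredholm determinant stops being finite.
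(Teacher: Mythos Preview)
Your proof is correct and follows the same overall architecture as the paper: Chernoff/Markov applied to the exponential moment, followed by Theorem~\ref{thm:moment bound for wce}, followed by an explicit spectral bound on the log-Fredholm-determinant that isolates the $1/\epsilon$ pole. The only difference is in how the log-determinant is controlled. The paper expands $-\log(1-x)=\sum_{m\ge1}x^{m}/m$ in full, bounds the $m=1$ term via Lemma~\ref{lem:bounds on zeta for sphere} (giving $\text{Tr}(\Delta_{g}^{-(1+\epsilon)})\le\epsilon^{-1}+2$) and the $m\ge2$ tail via $\text{Tr}(\Delta_{g}^{-2})=1$, arriving at $f(\lambda)=\lambda(\epsilon^{-1}+1)-\log(1-\lambda)$; this retains the $-\log(1-\lambda)$ singularity, which is then exploited in the subsequent optimization over $\alpha$ to get the sharpest explicit constants used later in the paper. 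Your route is more elementary: since every $x_{\ell}<1/2$, the crude bound $-\log(1-x)\le2x$ collapses the whole series to a single trace, which you estimate by $3\lambda\zeta(1+2\epsilon)\le3\lambda(1+\tfrac{1}{2\epsilon})$. This is quicker and perfectly adequate for the theorem as stated, at the cost of slightly worse constants and losing the $-\log(1-\lambda)$ structure that the paper uses downstream.
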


\subsection{Outlook on the the case of general compact surfaces}

The results for the two-sphere can be generalized to any compact two-dimensional
Riemannian manifold $X.$ Here we will just highlight the main points,
deferring details to \cite{berm17}. Given a Riemannian surface $(X,g)$
of strictly positive genus denote by $g_{c}$ the unique Riemannian
metric on $X$ with constant curvature which is conformally equivalent
to $g.$ To $(X,g_{c})$ one can attach a canonical $N-$particle
determinantal point process $(X^{N},d\P_{N}),$ which can be viewed
as a higher genus generalization of the spherical ensemble \cite{berm3}.
In this general setting the bound in Theorem \ref{thm:moment bound for wce}
holds up to multiplying the right hand side with a factor of the form
$(1+e^{-\delta/N}),$ for an explicit positive constant $\delta,$
depending on the injectivity radius of $(X,g_{c}).$ This is shown
in essentially the same way as in the spherical setting, using the
general Moser-Trudinger type inequalities in \cite{berm3} as a replacement
for the inequalities \ref{eq:spherical m-t intro} recalled below.
The analogs of Theorems \ref{thm:main intro}, \ref{thm:concentr of measure for spherical}
then follow as before. In particular, 
\[
\left\Vert \frac{1}{N}\sum_{i=1}^{N}\delta_{x_{i}}-dV_{g_{c}}\right\Vert _{H_{0}^{-2}(X)}=\mathcal{O}\left(\frac{(\log N)^{1/2}}{N}\right)
\]
holds with probability $1-\mathcal{O}(1/N^{\infty})$ (in the sense
of Theorem \ref{thm:main intro}). Expressed in terms of the original
Riemannian metric $g$ this means that introducing the ``weight function''
\[
w:=dV_{g}/dV_{g_{c}}
\]
and sampling a configuration $(x_{1},...,x_{N})$ in the canonical
$N-$particle ensemble $(X^{N},d\P_{N})$
\begin{equation}
\P_{N}\left(\sup_{f:\,\left\Vert f\right\Vert _{H_{0}^{2}(X)}\leq1}\left|\int_{X}fdV_{g}-\frac{1}{N}\sum_{i=1}^{N}f(x_{i})w(x_{i})\right|\leq\mathcal{O}\left(\frac{(\log N)^{1/2}}{N}\right)\right)\geq1-\mathcal{O}(1/N^{\infty})\label{eq:prob for general g}
\end{equation}
(in the sense of Theorem \ref{thm:main intro}). In fact, the original
Riemannian metric $g$ also induces a determinantal $N-$particle
point process on $X.$ In physical terms, the corresponding probability
measure $d\P_{g}^{N}$ on $X^{N}$ represents the probability density
for an integer Quantum Hall state, i.e. an $N-$particle state of
electrons confined to $(X,g)$ subject to a constant magnetic field,
whose strength is proportional to $N.$ However, as explained in \cite{berm17},
the error in the corresponding estimate \ref{eq:prob for general g}
will be of the larger order $\mathcal{O}(1/N^{1/2})$ (unless $g$
has constant curvature). Accordingly, replacing the original metric
$g$ with the constant curvature one $g_{c}$ is analogous to the
use of \emph{importance sampling} in the standard Monte-Carlo method.
Recall that the latter method amounts to calculating integrals $\int_{X}fdV_{g}$
by taking the points $x_{i}$ to be independent realizations of a
``target measure'' $\nu$ (taken to be different than $dV_{g}$
with the aim of reducing the variance; compare the discussion in \cite[Section 1.2]{b-h})

It should be stressed, however, that one advantage of the spherical
setting is that all the constants can be explicitly estimated, while,
in the general setting, the constants depend on spectral invariants
of $(X,g_{c}).$ Moreover, from a practical point of view the random
matrix realization of the spherical ensemble offers a convenient implementation
algorithm, while the general algorithm for simulating determinantal
point processes \cite{h-k-p} has to be employed for a general surface
$X$ (which, loosely speaking, replaces the task of finding the $N$
eigenvalues with Gram-Schmidt orthogonalization).

\subsection{Outline of the proof of Theorem \ref{thm:main intro}}

As shown in \cite{r-v0}, for a fixed function $f$ on $X$ the following
Central Limit Theorem (CLT) holds for the spherical ensemble: for
any $f\in H^{1}(X),$ normalized so that $\int|\nabla_{g}f|^{2}dV_{g}=4\pi,$
\begin{equation}
\lim_{N\rightarrow\infty}\P_{N}\left(\left|\frac{1}{N}\sum_{i=1}^{N}f(x_{i})-\int_{X}fd\sigma\right|\geq\frac{\lambda}{N}\right)=1-\int_{|y|\geq\lambda}e^{-y^{2}}dx/\pi^{1/2}\label{eq:clt of r-v}
\end{equation}
A key ingredient in the proof of Theorem \ref{thm:main intro}, given
in Section \ref{sec:Concentration-of-measure}, is the following quantitative
refinement of the previous CLT, obtained in \cite{berm3}: 
\[
\P_{N}\left(\frac{1}{N}\sum_{i=1}^{N}f(x_{i})-\int_{X}fd\sigma\leq\frac{\lambda}{N}\right)\leq e^{-\lambda^{2}/2}
\]
More precisely, the following slightly stronger dual bound on the
moment generating function was established in \cite{berm3} (using
complex differential geometry): 

\begin{equation}
\E_{N}\left(\exp N(N+1)\left(\frac{1}{N}\sum_{i=1}^{N}f(x_{i})-\int_{X}fd\sigma\right)\right)\leq e^{N(N+1)\lambda^{2}/2}\label{eq:spherical m-t intro}
\end{equation}
(coinciding when $N=1$ with the well-known sharp Moser-Trudinger
inequality on the two-sphere). Note, however, that these inequalities
only hold for a\emph{ fixed} normalized function $f\in H^{1}(X)$
and fail drastically for the random variable $\text{wce }(\boldsymbol{x}_{N};1)$
obtained by taking the sup over all normalized $f\in H^{1}(X).$ Indeed,
$\text{wce }(\boldsymbol{x}_{N};1)=\infty$ on all of $X^{N}$ since
$H^{1}(X)$ contains unbounded functions (recall that $s=1$ is the
borderline case for the Sobolev embedding of $H^{s}(X)$ into $C(X)).$ 

Here we will interpret the inequality \ref{eq:spherical m-t intro}
as the statement that the random variable 
\[
Y_{N}:=\sum_{i=1}^{N}\delta_{x_{i}}-Nd\sigma
\]
 taking values in the dual Sobolev space $H^{-(2+\epsilon)}(X)$ is
\emph{sub-Gaussian} wrt a canonical Gaussian random variable $G$
on $H^{-(2+\epsilon)}(X)$ (see Remark \ref{rem:gff}). Using some
basic Gaussian measure theory and spectral theory for the Laplacian
we then deduce the moment bound in Theorem \ref{thm:moment bound for wce},
which, in turn, implies the concentration of measure inequality Theorem
\ref{thm:concentr of measure for spherical}. Finally, we show that
the latter inequality, implies Theorem \ref{thm:main intro}, when
combined with the results in \cite{b-c-c-g-s-t,b-s-s-w} relating
$\text{wce }(\boldsymbol{x}_{N};s)$ corresponding to different values
of $s.$ 

\subsection{Further comparison with previous results}

As shown in \cite[Thm 1]{Hi}, building on \cite{a-z}, the spherical
ensemble satisfies the following asymptotics: for any $s\in]1,2[$
there exists a positive constant $C(s)$ such that
\begin{equation}
\sqrt{\E\left(\text{wce }(\boldsymbol{x}_{N};s)^{2}\right)}=C(s)N^{s/2}+o(N^{s/s})\label{eq:asympt of E av wce for spherical}
\end{equation}
This result should be compared with \cite[Thm 24]{b-s-s-w}, which
says that if $X$ is partitioned into $N$ equal area regions whose
diameters are bounded by $CN^{-1/2}$ and a sequence $\boldsymbol{x}_{N}$
of $N$ point is randomly chosen from $N$ different regions, then
the corresponding $\sqrt{\E\left(\text{wce }(\boldsymbol{x}_{N};s)^{2}\right)}$
is also of the order $\mathcal{O}(N^{s/2}),$ when $s\in]1,2[.$ However,
in contrast to Theorem \ref{thm:main intro}, the results in \cite{Hi,b-s-s-w},
referred to above, do not give any information about the probability
that the worst-case-error $\text{wce }(\boldsymbol{x}_{N};s)$ for
a random sequence $\boldsymbol{x}_{N}$ in the corresponding ensembles
is close to the average worst-case-error. The only previous result
in this direction appears to be \cite[Thm 1.1]{a-z}, saying that
for any $M>0$ there exists $C_{M}>0$ such that 
\begin{equation}
\P\left(D_{L^{\infty}}^{C}(\boldsymbol{x}_{N})\leq C_{M}\frac{(\log N)^{1/2}}{N^{3/4}}\right)\geq1-\frac{1}{N^{M}}.\label{eq:ineq for L infty sph cap}
\end{equation}
where $D_{L^{\infty}}^{C}(\boldsymbol{x}_{N})$ is the\emph{ $L^{\infty}-$spherical
cap discrepancy} defined by 
\[
D_{L^{\infty}}^{C}(\boldsymbol{x}_{N}):=\sup_{f=1_{\mathcal{C}}}\left|\int_{X}fd\sigma-\frac{1}{N}\sum_{i=1}^{N}f(x_{i})\right|
\]
where the sup if taken over all characteristic $f$ functions of the
form $f=1_{\mathcal{C}},$ where $\mathcal{C}$ is a spherical cap
in the two-sphere $X,$ i.e. the intersection of $X$ with a half-space
in $\R^{3}$ (the proof of the inequality  \ref{eq:ineq for L infty sph cap}
is based on a variance estimate in \cite{a-z}). Since 
\begin{equation}
\text{wce }(\boldsymbol{x}_{N};\frac{3}{2})\leq aD_{L^{\infty}}^{C}(\boldsymbol{x}_{N})\label{eq:wce smaller than D infty}
\end{equation}
for an explicit constant $a$ \cite[Page 16]{b-s-s-w}), the inequality
\ref{eq:ineq for L infty sph cap} implies that

\begin{equation}
\P\left(\text{wce }(\boldsymbol{x}_{N};\frac{3}{2})\leq C_{M}\frac{(\log N)^{1/2}}{N^{3/4}}\right)\geq1-\frac{1}{N^{M}}.\label{eq:ineq for wce in az}
\end{equation}
This is a bit weaker then the case $s=3/2$ of Theorem \ref{thm:main intro}
(where the power of $\log N$ is $3/8(<1/2)$ and moreover the dependence
of $C_{M}$ on $M$ is made explicit). We recall that the inequality
\ref{eq:wce smaller than D infty} follows from the fact that $\text{wce }(\boldsymbol{x}_{N};\frac{3}{2})$
is comparable to the\emph{ }$L^{2}-$spherical cap discrepancy 
\[
D_{L^{2}}^{C}(\boldsymbol{x}_{N}):=\int_{\mathcal{}}\left|\int_{X}fd\sigma-\frac{1}{N}\sum_{i=1}^{N}f(x_{i})\right|^{2}Df,\,\,\,f=1_{\mathcal{C}}
\]
where $Df$ is a certain probability measure measure on the space
of all spherical caps $\mathcal{C}$ \cite[Page 16]{b-s-s-w}.

\subsubsection{\label{subsec:Explixit-sequences-for}Explicit sequences for numerical
integration on the sphere }

As recalled above, $\text{wce }(\boldsymbol{x}_{N};\frac{3}{2})$
is comparable to the\emph{ }$L^{2}-$spherical cap discrepancy $D_{L^{2}}^{C}(\boldsymbol{x}_{N}).$
In \cite{l-r-s} the representation theory of Hecke operators and
modular forms was used to obtain an explicit sequence satisfying the
bound $D_{L^{2}}^{C}(\boldsymbol{x}_{N})\leq CN^{-1/2}\log N$ (see
\cite[I, Thm 2.2]{l-r-s}). The proof of the bound uses Deligne's
proof of the Weil conjectures and also yields, as explained in \cite[Remark 3.10]{b-c-c-g-s-t},
$\text{wce }(\boldsymbol{x}_{N};s)\leq CN^{-1/2}\log N$ for any $s>1.$
However, these rates are only close to optimal as $s$ approaches
$1.$ A different sequence satisfying $D_{L^{2}}^{C}(\boldsymbol{x}_{N})\leq CN^{-1/2}(\log N)^{1/2}$
was then constructed in \cite{b-d}, by mapping a digital net on the
square to $X.$ Numerical evidence was provided in \cite{b-d} indicating
that the latter sequence has the optimal rate $\mathcal{O}(N^{-3/4}).$
See also \cite[Section 8]{b-s-s-w} for numerical experiments for
a range of different classes of point sets on the two-sphere. 

\subsubsection{Concentration of measure}

It may be illuminating to compare Theorem \ref{thm:main intro}for
$s=2$ with the concentration of measure inequalities for\emph{ independent}
random variables established in \cite{b-g-v}, which can be viewed
as a quantitative refinement of the classical CLT \ref{eq:classical clt}.
In the particular case of standard Monte-Carlo integration on a cube
the inequalities in \cite{b-g-v} imply that there exists a constant
$C$ such that for any $R>C$
\begin{equation}
\P_{N}\left(\sup_{\left\Vert \nabla f\right\Vert _{L^{\infty}}\leq1}\left|\int_{X}fdx-\frac{1}{N}\sum_{i=1}^{N}f(x_{i})\right|\leq R\frac{(\log N)^{1/2}}{N^{1/2}}\right)\geq1-\frac{1}{N^{R^{2}/C}}\label{eq:ineq in villani et al}
\end{equation}
(see also \cite{bo} for a simplified proof). This inequality thus
exhibits the smaller denominator $1/N^{1/2},$ due the points $x_{i}$
being independent random variables. Moreover, the role of the Sobolev
norm $W^{1,2}$ appearing for $s=2$ is in the inequality \ref{eq:ineq in villani et al}
played by the $W^{1,\infty}-$norm $\left\Vert \nabla f\right\Vert _{L^{\infty}}$.
The proof uses the dual representation of the $W^{1,\infty}-$norm
between probability measures as the $L^{1}-$Wasserstein metric (aka
Monge-Kantorovich distance) which fits into the general setting of
optimal transport theory. We also recall that in the particular case
when $d=1$ the sharp form of the Dvorestky-Kiefer-Wolfowitz inequality
for $N$ independent real random variables (motivated by the Kolmogorov-Smirnov
test for goodness of fit in statistics) \cite{ma} yields
\[
\P_{N}\left(\sup_{\left\Vert \nabla f\right\Vert _{L^{\infty}}\leq1}\left|\int_{X}f\mu-\frac{1}{N}\sum_{i=1}^{N}f(x_{i})\right|\geq\lambda\right)\leq2e^{-2N\lambda^{2}}
\]
for any probability measure $\mu$ on $\R$ with a continuous density
(see the discussion in \cite[page 2304-2305]{bo}). Generalizations
of the concentration of measure inequality \ref{eq:ineq in villani et al}
to general Coulomb (and Riesz) gas ensembles $(d\P_{N,\beta},\R^{N})$
in Euclidean $\R^{N}$ have been obtained in \cite{R-S,c-h-m} and
on compact Riemannian manifolds in \cite{ga-z}. In particular, in
the case of the spherical ensemble the inequalities in \cite{ga-z}
say that

\begin{equation}
\P_{N}\left(\sup_{\left\Vert \nabla f\right\Vert _{L^{\infty}}\leq1}\left|\int_{X}fdx-\frac{1}{N}\sum_{i=1}^{N}f(x_{i})\right|\leq\delta\right)\geq1-e^{-\frac{1}{4\pi}\frac{\delta^{2}}{2}+\frac{1}{4\pi}\frac{\log N}{N}+C\frac{1}{N}}\label{eq:Garcia ine}
\end{equation}
To see the relation to the present $L^{2}-$setting note that the
Sobolev inequality shows that, in dimension $d=2,$ 
\[
\left\Vert \nabla f\right\Vert _{L^{\infty}}(X)\leq C_{\epsilon}\left\Vert \nabla f\right\Vert _{H^{2+\epsilon}(X)}
\]
for any $\epsilon>0$ (where the constant $C_{\epsilon}$ blows up
as $\epsilon\rightarrow0).$ Hence, the inequality \ref{eq:Garcia ine}
implies a concentration inequality for Sobolev $H^{2+\epsilon}(X)-$norms
which is similar to the inequality in Theorem \ref{thm:concentr of measure for spherical}).
However, the main virtue of Theorem \ref{thm:concentr of measure for spherical}
is that, for a fixed $\epsilon>0,$ there is no $N-$dependent sub-dominant
error terms in the right hand side. This allows one to apply Theorem
\ref{thm:concentr of measure for spherical} to $\delta$ of the order
$N^{-1}$ (modulo logarithmic factors), while one can at best take
$\delta$ of the order $N^{-1/2}$ in the inequality \ref{eq:Garcia ine}.

\subsection{Acknowledgments}

Thanks to Klas Modin for comments on a draft of the paper. This work
was supported by grants from the KAW foundation, the Göran Gustafsson
foundation and the Swedish Research Council.

\section{\label{sec:Concentration-of-measure}Spectral preparations}

We will denote by $\P_{N}$ and $\E_{N}$ the probabilities and expectations,
respectively, defined wrt the spherical ensemble with $N-$particles
$(X^{N},d\P_{N})$ (whose definition was recalled in Section \ref{subsec:Main-results-for}).
We start with some preliminaries.

\subsection{\label{subsec:Sobolev-spaces-and}Sobolev spaces and spectral theory}

Let us first consider a general setup of Sobolev spaces on a compact
Riemannian manifold $(X,g).$ Denote by $\left\langle \cdot,\cdot\right\rangle _{L^{2}}$
the corresponding scalar product on $C^{\infty}(X):$ 
\[
\left\langle u,v\right\rangle _{L^{2}}:=\int_{X}uvdV_{g},
\]
 where $dV_{g}$ denotes the Riemannian volume form (we will denote
by $d\sigma_{g}$ the probability measure obtained by normalizing
$dV_{g}).$ We will denote by $\Delta_{g}$ the Laplace operator on
$C^{\infty}(X),$ with the sign convention which makes $\Delta_{g}$
a densely defined \emph{positive }symmetric operator on $L^{2}(X,dV_{g}):$
\[
\left\langle \Delta_{g}u,v\right\rangle _{L^{2}}:=\int_{X}g(\nabla_{g}u,\nabla_{g}v)dV_{g},
\]
where $\nabla_{g}u$ denotes the gradient of $u$ wrt $g.$ By the
spectral theorem, for any $p\in\R$ the $p$th power $\Delta_{g}^{p}$
is a densely defined operator on $L^{2}(X,dV_{g}).$ 

Fix a ``smoothness parameter'' $s,$ assumed to be strictly positive: 
\begin{itemize}
\item $H^{s}(X)/\R$ is defined as the completion of $C^{\infty}(X)/\R$
with respect to the scalar product defined by 
\begin{equation}
\left\langle u,u\right\rangle _{s}:=\int_{X}\Delta_{g}^{s/2}u\Delta_{g}^{s/2}udV_{g}=\int_{X}u\Delta^{s}udV_{g}\label{eq:def of scalar s}
\end{equation}
\item $H_{0}^{-s}(X)$ is defined as the sub-space of all distributions
$\nu$ on $X$ such that $\left\langle \nu,1\right\rangle =0$ satisfying
\[
\left\langle \nu,\nu\right\rangle _{-s}:=\sup_{u\in C^{\infty}(X)}\frac{\left\langle \nu,u\right\rangle }{\left\langle u,u\right\rangle _{s}}<\infty
\]
\end{itemize}
Here we view a distribution $\nu$ on $X$ as an element in the linear
dual of the vector space $C^{\infty}(X).$ We endow $H^{s}(X)/\R$
and $H_{0}^{-s}(X)$ with the Hilbert space structures defined by
the scalar products $\left\langle \cdot,\cdot\right\rangle _{s}$
and $\left\langle \cdot,\cdot\right\rangle _{-s},$ respectively.
Note that the norm on $H^{s}(X)/\R$ is increasing wrt $s,$ while
the norm on $H_{0}^{-s}(X)$ is decreasing wrt $s.$ Moreover, by
definition, we have that 
\begin{equation}
\left\Vert \delta_{N}(\boldsymbol{x}_{N})-d\sigma_{g}\right\Vert _{H_{0}^{-s}(X)}^{2}=\text{wce }(\boldsymbol{x}_{N};s)\label{eq:sobolev is wce}
\end{equation}
where $\text{wce }(\boldsymbol{x}_{N};s)$ is the worst-case error
for the integration rule on $X$ with node set $\boldsymbol{x}_{N}$
with respect to the smoothness parameter $s\in]1,\infty[$ (defined
by formula \ref{eq:def of wce intro}). By the Sobolev embedding theorem
$\text{wce }(\boldsymbol{x}_{N};s)$ is finite precisely when $s>\dim X/2,$

By duality the operator $\Delta_{g}$ is also defined on the space
of all distributions $\nu:$
\[
\left\langle \Delta_{g}\nu,u\right\rangle :=\left\langle \nu,\Delta_{g},u\right\rangle 
\]
 The following lemma follows directly from the definition of the Hilbert
spaces in question:
\begin{lem}
\label{lem:isometry}The operator $\Delta_{g}$ induces an isometry
when restricted to $C^{\infty}(X)/\R$
\[
H^{s}(X)/\R\rightarrow H^{s-2}(X)/\R,\,\,H_{0}^{-s}(X)\rightarrow H_{0}^{-s-2}(X)
\]
\end{lem}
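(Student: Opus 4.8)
\emph{Proof sketch.} The plan is to reduce the statement to a one-line spectral identity together with a density argument. First I would recall from the spectral theorem that the positive symmetric operator $\Delta_{g}$ has discrete spectrum $0=\lambda_{0}<\lambda_{1}\leq\lambda_{2}\leq\cdots$, so that its restriction to the mean-zero part $C^{\infty}(X)/\R$ is invertible and the real power $\Delta_{g}^{p}$ is a well-defined operator on $C^{\infty}(X)/\R$ for every $p\in\R$, mapping $C^{\infty}(X)/\R$ bijectively onto itself --- indeed, a smooth function has spectral coefficients decaying faster than any power of $\lambda_{k}$, while $\lambda_{k}$ grows only polynomially by Weyl's law, so $\Delta_{g}^{p}u$ is again smooth. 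In particular the scalar products $\left\langle \cdot,\cdot\right\rangle _{t}$ of \ref{eq:def of scalar s} make sense on $C^{\infty}(X)/\R$ for all real $t$.

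For the first isometry, given $u\in C^{\infty}(X)/\R$ I would simply compute, using the self-adjointness of $\Delta_{g}$ with respect to the $L^{2}$-pairing and the commutativity of its powers,
\[
\left\langle \Delta_{g}u,\Delta_{g}u\right\rangle _{s-2}=\int_{X}\Delta_{g}u\cdot\Delta_{g}^{s-1}u\,dV_{g}=\int_{X}u\cdot\Delta_{g}^{s}u\,dV_{g}=\left\langle u,u\right\rangle _{s}.
\]
Hence $\Delta_{g}$ is an isometric bijection of $C^{\infty}(X)/\R$ for the pair of norms $\left\Vert \cdot\right\Vert _{s}$ and $\left\Vert \cdot\right\Vert _{s-2}$, with inverse $\Delta_{g}^{-1}$ satisfying the same norm identity. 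Since $C^{\infty}(X)/\R$ is, by definition, dense in both $H^{s}(X)/\R$ and $H^{s-2}(X)/\R$, this map extends by continuity to an isometric isomorphism of the completions. The dual statement is obtained the same way on the distributional side: by duality $\Delta_{g}$ is self-adjoint on distributions and preserves the mean-zero condition, since $\left\langle \Delta_{g}\nu,1\right\rangle =\left\langle \nu,\Delta_{g}1\right\rangle =0$; and writing $\left\Vert \nu\right\Vert _{-t}^{2}=\left\langle \nu,\Delta_{g}^{-t}\nu\right\rangle$ (the supremum in the definition of $\left\langle \cdot,\cdot\right\rangle _{-t}$ being attained, by Cauchy--Schwarz, at $u=\Delta_{g}^{-t}\nu$), the identical manipulation gives
\[
\left\Vert \Delta_{g}\nu\right\Vert _{-s-2}^{2}=\left\langle \Delta_{g}\nu,\Delta_{g}^{-s-1}\nu\right\rangle =\left\langle \nu,\Delta_{g}^{-s}\nu\right\rangle =\left\Vert \nu\right\Vert _{-s}^{2}.
\]
Surjectivity onto $H_{0}^{-s-2}(X)$ then follows since for any $\mu\in H_{0}^{-s-2}(X)$ one has $\left\Vert \Delta_{g}^{-1}\mu\right\Vert _{-s}=\left\Vert \mu\right\Vert _{-s-2}<\infty$, so that $\Delta_{g}^{-1}\mu\in H_{0}^{-s}(X)$.

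There is no serious obstacle here; the statement is essentially bookkeeping with the spectral calculus of $\Delta_{g}$. The only two points that merit a word of care are (i) justifying that the real powers $\Delta_{g}^{p}$ are well-defined and smoothness-preserving on $C^{\infty}(X)/\R$, so that all the inner products above are legitimate, and (ii) the passage from the dense subspace $C^{\infty}(X)/\R$ to the Hilbert space completions, including verifying that the extended map is onto and not merely isometric. Both are handled by the discreteness and strict positivity of the spectrum recorded in the first step.
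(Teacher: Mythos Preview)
Your argument is correct and is exactly the direct verification the paper has in mind: the paper gives no proof beyond the remark that the lemma ``follows directly from the definition of the Hilbert spaces in question,'' and your spectral-calculus computation $\langle\Delta_{g}u,\Delta_{g}u\rangle_{s-2}=\langle u,u\rangle_{s}$ together with the dual identity and the density/surjectivity checks is precisely that unwinding. There is nothing to add.
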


Next, recall that, by the spectral theorem, the set of eigenfunctions
$f_{i}$ of $\Delta_{g}$ in $C^{\infty}(X)$ form and orthonormal
bases for $L^{2}(X,dV_{g}).$ The following lemma then follows directly
by duality:
\begin{lem}
\label{lem:expansion in terms of eig}There exists an orthonormal
basis $\nu_{i}$ in the Hilbert space $\left\langle H_{0}^{-s}(X),\left\langle \cdot,\cdot\right\rangle _{-s}\right\rangle $
such that 
\[
\nu_{i}=f_{i}dV_{g}
\]
 (acting on $C^{\infty}(X)$ by integration) where $f_{i}$ runs over
all eigenfunctions of the Laplacian on $C^{\infty}(X)$ with strictly
positive eigenvalues $\lambda_{i}.$ As a consequence, if $f_{i}$
is normalized so that $\left\Vert f_{i}\right\Vert _{L^{2}}=1$ and
\[
\nu=\sum_{i=1}^{\infty}c_{i}f_{i}
\]
 in $H_{0}^{-s}(X)$ then 
\[
\left\langle \nu,\nu\right\rangle _{-s}:=\sum_{i=1}^{\infty}\lambda_{i}^{-s}c_{i}^{2}
\]
\end{lem}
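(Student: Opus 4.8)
The plan is to diagonalize everything in the eigenbasis of $\Delta_g$ and read the claim off from the definition of the dual norm, so that (as the remark before the lemma anticipates) it really is just duality. First I would invoke the spectral theorem for the Laplacian on the compact manifold $X$: since $\Delta_g$ is positive, self-adjoint and has compact resolvent, its spectrum is discrete, $0=\lambda_0<\lambda_1\le\lambda_2\le\cdots\to\infty$, with an $L^2(X,dV_g)$-orthonormal basis of eigenfunctions $\{f_i\}_{i\ge 0}$, $f_0$ constant. Modding out the constants, the span of $\{f_i\}_{i\ge 1}$ is dense in $C^\infty(X)/\R$, and from \ref{eq:def of scalar s} one computes $\langle f_i,f_j\rangle_s=\int f_i\Delta_g^s f_j\,dV_g=\lambda_i^s\delta_{ij}$ for $i,j\ge 1$. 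Hence the map $u\mapsto(\langle u,f_i\rangle_{L^2})_{i\ge 1}$ identifies $H^s(X)/\R$ isometrically with the weighted sequence space $\{(a_i)_{i\ge1}:\sum_i\lambda_i^s a_i^2<\infty\}$.

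Next I would compute the dual norm. Given $\nu\in H_0^{-s}(X)$, set $c_i:=\langle\nu,f_i\rangle$ for $i\ge 1$ (the pairing of the distribution $\nu$ against the smooth function $f_i$); note $c_0=\langle\nu,1\rangle=0$ by hypothesis. For $u=\sum_{i\ge 1}a_i f_i\in C^\infty(X)/\R$ we have $\langle\nu,u\rangle=\sum_i c_i a_i$ and $\langle u,u\rangle_s=\sum_i\lambda_i^s a_i^2$, so Cauchy--Schwarz in the form
\[
\Big(\sum_i c_i a_i\Big)^2=\Big(\sum_i(\lambda_i^{-s/2}c_i)(\lambda_i^{s/2}a_i)\Big)^2\le\Big(\sum_i\lambda_i^{-s}c_i^2\Big)\Big(\sum_i\lambda_i^s a_i^2\Big),
\]
with equality for $a_i$ proportional to $\lambda_i^{-s}c_i$, yields exactly $\langle\nu,\nu\rangle_{-s}=\sum_{i\ge 1}\lambda_i^{-s}c_i^2$, which is the displayed ``consequence'' (with $f_i$ here normalized so that $\|f_i\|_{L^2}=1$). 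In particular $\nu\mapsto(c_i)_{i\ge1}$ identifies $H_0^{-s}(X)$ isometrically with the weighted sequence space with weights $\lambda_i^{-s}$.

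For the orthonormal-basis statement, take $\nu=f_j\,dV_g$, which lies in $H_0^{-s}(X)$ for $j\ge 1$ (it is a smooth density with $\int f_j\,dV_g=0$); its coefficients are $c_i=\int f_i f_j\,dV_g=\delta_{ij}$, so $\langle f_i\,dV_g,f_i\,dV_g\rangle_{-s}=\lambda_i^{-s}$ and, by polarization, $\langle f_i\,dV_g,f_j\,dV_g\rangle_{-s}=0$ for $i\ne j$; thus the rescaled distributions $\nu_i:=\lambda_i^{s/2}f_i\,dV_g$ (equivalently, $f_i\,dV_g$ after renormalizing $f_i$ so that $\|f_i\,dV_g\|_{H_0^{-s}}=1$, which is the normalization used in the first part of the statement) form an orthonormal system. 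Completeness is immediate from the isometry above: the coefficient map is onto the full weighted $\ell^2$ space, since for any weighted-square-summable $(c_i)$ the partial sums $\sum_{i\le M}c_i f_i\,dV_g$ are Cauchy in $H_0^{-s}(X)$, and it is injective, since $c_i=0$ for all $i\ge 1$ forces $\nu$ to annihilate the dense subspace spanned by $\{f_i\}_{i\ge1}$ in $H^s(X)/\R$, hence $\nu=0$. I do not expect a genuine obstacle here; the only things requiring care are the bookkeeping of the two different normalizations of $f_i$ that appear in the statement, and applying the definitions of the completions $H^s(X)/\R$ and $H_0^{-s}(X)$ consistently so that the two weighted sequence spaces are genuinely dual.
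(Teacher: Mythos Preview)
Your proposal is correct and is exactly the ``duality'' argument the paper alludes to but does not spell out: the paper's entire proof is the sentence ``The following lemma then follows directly by duality,'' and you have supplied the details by diagonalizing $H^s(X)/\R$ in the Laplace eigenbasis and reading off the dual norm via Cauchy--Schwarz. Your observation about the two different normalizations of $f_i$ (one making $\nu_i=f_i\,dV_g$ orthonormal in $H_0^{-s}$, the other $\|f_i\|_{L^2}=1$ used in the coefficient formula) is also well taken and clarifies a point the statement leaves implicit.
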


\begin{rem}
\label{rem:equiv norms}In the literature different Sobolev space
norms on $H^{s}(X)/\R$ are often used, for example, obtained by replacing
$\Delta^{s}$ in the last equality in formula \ref{eq:def of scalar s}
with $(I+\Delta)^{s}$ (as in \cite{b-c-c-g-s-t,b-s-s-w}) or, more
generally, any other elliptic pseudodifferential operator $P_{s}$
of order $s.$ \cite{c-f}  Anyway, the norms on $H^{s}(X)/\R$ defined
by any two such operators are quasi-isometric, by elliptic regularity
theory (see the discussions in \cite{c-f,b-s-s-w}). Hence, when the
norm is changed Theorem \ref{thm:concentr of measure for spherical}
still applies if $\delta$ is replaced by $C(\epsilon)\delta$ for
a positive constant $C(\epsilon)$ (and similarly for Theorems \ref{thm:moment bound for wce},
\ref{thm:main intro}).
\end{rem}

\subsubsection{\label{subsec:Spectral-theory}Spectral theory}

Recall that the\emph{ spectral zeta function} of the Laplacian $\Delta_{g}$
is defined by
\[
\text{Tr}(\Delta_{g}^{-p}):=\sum_{i=1}^{\infty}\lambda_{i}^{-p}
\]
which is convergent for $p>\dim X/2.$ More precisely, 
\[
\text{Tr}(\Delta_{g}^{-(d/2+\epsilon)})=\frac{1}{\Gamma(d/2)}\frac{\text{Vol}\ensuremath{(X,g)}}{(4\pi)^{d/2}}\frac{1}{\epsilon}+O(1),\,\,\,\epsilon\rightarrow0^{+}
\]
as follows, for example, from the expansion of the heat kernel, i.e.
the Schwartz kernel of $\text{Tr}(e^{-t\Delta_{g}}).$ We will prove
explicit estimates in the case of the two-sphere below. 

The \emph{Fredholm (spectral) determinant }of $\Delta_{g}^{-p}$ is
the function 
\[
D(\lambda,p):=\det(I-\lambda\Delta_{g}^{-p}):=\prod_{i=1}^{\infty}(1-\lambda\lambda_{i}^{-p})
\]
which is convergent for $p>\dim X/2$ and $\lambda\in]0,\lambda_{1}^{p}[.$
Indeed, since the Taylor expansion of $-\log(1-\lambda t)$ equals
$\sum_{m=1}^{\infty}\frac{\lambda^{m}}{m},$ 
\begin{equation}
-\log\det(I-\lambda\Delta_{g}^{-p}):=\sum_{m=1}^{\infty}\frac{\text{Tr}(\Delta^{-mp})}{m}\lambda^{m}\label{eq:taylor expansion}
\end{equation}

\subsubsection{\label{subsec:The-case-of two-sphere sob}The case of the two-sphere}

Consider now the case when $(X,g)$ is the two-sphere, i.e. the unit-sphere
in $\R^{3}$ endowed with the metric $g$ induced by the Euclidean
metric on $\R^{3}.$ Note that under stereographic projection, whereby
$X$ minus the ``north pole'' is identified with $\R^{2},$ we have

\[
\Delta_{g}udV_{g}=-\left(\frac{\partial^{2}u}{\partial^{2}x}+\frac{\partial^{2}u}{\partial^{2}y}\right)dx\wedge dy,\,\,\,dV_{g}=\frac{4dx\wedge dy}{(1+x^{2}+y^{2})^{2}}
\]
Moreover, the set of non-zero eigenvalues of $\Delta_{g}$ are given
by all numbers of the form $l(l+1),$ where $l$ ranges over the positive
integers. The eigenvalue corresponding to a given $l$ has multiplicity
$2l+1.$ 
\begin{rem}
\label{rem:shifted Laplac on sphere}Another convenient norm on $H^{s}(X)/\R$
may be obtained by replacing $\Delta_{g}$ with $\Delta_{g}+1/4$
in formula \ref{eq:def of scalar s} (compare the discussion in Remark
\ref{rem:equiv norms}). The point is that the eigenvalues of $\Delta_{g}+1/4$
are given by $(l+1/2)^{2}.$ This implies that the corresponding spectral
function may be explicitly expressed as $\text{Tr}\left((\Delta_{g}+4^{-1})^{-p}\right):=2^{2p-2}\zeta(2p-1),$
where $\zeta$ is Riemann's zeta function (see \cite[page 453]{vo}). 
\end{rem}

We will use the following slight refinement of \cite[Lemma 26]{b-s-s-w}:
\begin{lem}
\label{lem:compar of w}$1<s'<s$ and $\text{wce }(\boldsymbol{x}_{N};s)\leq1$
then
\[
\text{wce }(\boldsymbol{x}_{N};s')\leq c(s',s)\text{(wce }(\boldsymbol{x}_{N};s)^{s'/s},
\]
 where the constant $c(s',s)$ is given by 
\[
c(s',s)=\sqrt{\frac{\Gamma(s)}{\Gamma(s')}+\frac{s^{s}e^{-s}+1}{\Gamma(s')(s'-1)}}
\]
In particular, $c(2,2+\epsilon)\leq e^{1/2},$ when $\epsilon\leq0.15.$
\end{lem}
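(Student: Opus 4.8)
The plan is to compare the two Sobolev norms $\left\langle\cdot,\cdot\right\rangle_{-s}$ and $\left\langle\cdot,\cdot\right\rangle_{-s'}$ of the mean-zero distribution $\nu:=\delta_N(\boldsymbol{x}_N)-d\sigma_g$ using the eigenvalue expansion of Lemma \ref{lem:expansion in terms of eig}. Writing $\nu=\sum_i c_i f_i$ with $\lambda_i$ the nonzero eigenvalues of $\Delta_g$, we have $\text{wce}(\boldsymbol{x}_N;s)=\sum_i\lambda_i^{-s}c_i^2$ and likewise for $s'$. First I would split the tail from the head at a threshold: for eigenvalues with $\lambda_i\ge1$ one has $\lambda_i^{-s'}\le\lambda_i^{-s}$ trivially (since $s'<s$ and $\lambda_i\ge1$), so $\sum_{\lambda_i\ge1}\lambda_i^{-s'}c_i^2\le\text{wce}(\boldsymbol{x}_N;s)$; but in fact on the two-sphere all nonzero eigenvalues are $l(l+1)\ge2>1$, so there is no low-eigenvalue head to worry about and this step is clean. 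The slightly delicate point is that we want the exponent $s'/s<1$ on the right-hand side, not exponent $1$; to get this I would interpolate.

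The interpolation step is the heart of the argument. Set $\theta:=s'/s\in(0,1)$. By Hölder's inequality applied to the sum $\sum_i\lambda_i^{-s'}c_i^2=\sum_i(\lambda_i^{-s}c_i^2)^{\theta}\cdot(c_i^2)^{1-\theta}$ with conjugate exponents $1/\theta$ and $1/(1-\theta)$, one obtains
\[
\text{wce}(\boldsymbol{x}_N;s')\le\Bigl(\sum_i\lambda_i^{-s}c_i^2\Bigr)^{\theta}\Bigl(\sum_i c_i^2\Bigr)^{1-\theta}=\text{wce}(\boldsymbol{x}_N;s)^{s'/s}\cdot\|\nu\|_{H_0^0}^{2(1-s'/s)}.
\]
However $\|\nu\|_{H_0^0}$ is the $L^2$-norm of the distribution $\nu$, which is infinite since $\delta_N$ is not in $L^2$. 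So this naive split fails, and instead I would interpolate between $s'$ and $s$ directly: writing $\lambda_i^{-s'}=\lambda_i^{-s\theta}\cdot\lambda_i^{-(s'-s\theta)}$ — but $s'-s\theta=0$, so this is circular. The correct move, following \cite[Lemma 26]{b-s-s-w}, is to bound the sum over the regime where $\lambda_i^{-s}c_i^2$ is small by comparison with $\text{wce}(\boldsymbol{x}_N;s)\le1$ raised to the power $s'/s$, using that $\lambda_i^{-s'}=(\lambda_i^{-s})^{s'/s}$ and $t\mapsto t^{s'/s}$ is concave, together with the spectral zeta bound $\sum_i\lambda_i^{-s'/s\cdot\text{(something)}}<\infty$. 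Concretely: by Jensen/concavity applied to the probability-like weights $c_i^2/\sum c_j^2$ one reduces to controlling $\sum_i\lambda_i^{-s'}$ type sums; but since $\sum c_i^2=\infty$ this again needs the refinement in which one only uses that $c_i^2\le\text{wce}(\boldsymbol{x}_N;1)$-type quantities fail — so the actual mechanism must be: $\lambda_i^{-s'}\le\lambda_i^{-s}+\lambda_i^{-s'}\mathbf{1}_{\lambda_i\text{ small}}$, and on the sphere $\lambda_1=2$, so $\lambda_i^{-s'}=\lambda_i^{-s}\lambda_i^{s-s'}\le\lambda_i^{-s}\cdot1$. This shows $\text{wce}(\boldsymbol{x}_N;s')\le\text{wce}(\boldsymbol{x}_N;s)$, which combined with $\text{wce}(\boldsymbol{x}_N;s)\le1$ gives $\text{wce}(\boldsymbol{x}_N;s)\le\text{wce}(\boldsymbol{x}_N;s)^{s'/s}$, and absorbing the discrepancy via $\sum_i\lambda_i^{-s'}$-weighted estimates produces the stated constant $c(s',s)$; the Gamma-function values arise from the spectral zeta function estimate $\text{Tr}(\Delta_g^{-p})\approx\Gamma(1)^{-1}\text{Vol}(X)(4\pi)^{-1}p^{-1}+O(1)$ specialized to $X=S^2$ where $\text{Vol}=4\pi$, giving the $1/(s'-1)$ pole and the $\Gamma(s)/\Gamma(s')$ ratio from the sharp constants, and the $s^se^{-s}$ term from bounding the error $O(1)$ in the zeta expansion via Stirling.

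The main obstacle I anticipate is making the constant $c(s',s)$ \emph{explicit and sharp enough} that the clean bound $c(2,s')\le e^{1/2}$ for $\epsilon\le0.15$ comes out — this requires a quantitative (not just asymptotic) estimate on the spectral zeta function $\text{Tr}(\Delta_g^{-p})=\sum_{l\ge1}(2l+1)(l(l+1))^{-p}$ of the round sphere, or equivalently (by Remark \ref{rem:shifted Laplac on sphere}) on $2^{2p-2}\zeta(2p-1)$ plus a correction for the shift $\Delta_g$ versus $\Delta_g+1/4$. I would carry out: (i) the eigenvalue-expansion setup via Lemma \ref{lem:expansion in terms of eig}; (ii) the interpolation inequality splitting $\text{wce}(\boldsymbol{x}_N;s')$ into a part dominated by $\text{wce}(\boldsymbol{x}_N;s)^{s'/s}$ and a remainder controlled by $\text{Tr}(\Delta_g^{-s'})$ times $\text{wce}(\boldsymbol{x}_N;s)$, using $\text{wce}(\boldsymbol{x}_N;s)\le1$ to fold the latter into the former with the exponent $s'/s$; (iii) an explicit bound on $\text{Tr}(\Delta_g^{-s'})\le\Gamma(s)/\Gamma(s')\cdot(\text{something})+(s^se^{-s}+1)/(\Gamma(s')(s'-1))$ via comparing the sum to the integral $\int_1^\infty(2t+1)(t(t+1))^{-s'}dt$ and a Stirling bound on the first term; and (iv) numerical verification at $s=2$. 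Step (iii) is where the care lies; everything else is bookkeeping with Hölder/Jensen and monotonicity of $t^{s'/s}$.
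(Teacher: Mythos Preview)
Your proposal has a genuine gap: the basic inequality you invoke is in the wrong direction. If $1<s'<s$ and $\lambda_i\ge 1$ then $\lambda_i^{s'}\le\lambda_i^{s}$, hence $\lambda_i^{-s'}\ge\lambda_i^{-s}$, not the other way around. In fact the monotonicity of the $H_0^{-s}$ norms already tells you $\text{wce}(\boldsymbol{x}_N;s')\ge\text{wce}(\boldsymbol{x}_N;s)$, so the lemma is a nontrivial upper bound on the \emph{larger} quantity, and your ``clean'' step $\sum_{\lambda_i\ge1}\lambda_i^{-s'}c_i^2\le\text{wce}(\boldsymbol{x}_N;s)$ simply fails. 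The subsequent attempts (H\"older with the $L^2$ norm of $\nu$, Jensen against the weights $c_i^2/\sum c_j^2$) all founder on exactly the obstruction you yourself flag: $\sum_i c_i^2=\infty$ for a measure containing Dirac masses. None of the eigenvalue-sum manipulations you propose closes this gap, and the vague reference to ``absorbing the discrepancy via $\sum_i\lambda_i^{-s'}$-weighted estimates'' is not an argument.

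The paper's proof proceeds by a completely different mechanism which you are missing. One writes, via the Mellin identity $\lambda^{-p}=\Gamma(p)^{-1}\int_0^\infty e^{-t\lambda}t^{p-1}\,dt$,
\[
\text{wce}(\boldsymbol{x}_N;s')^2=\frac{1}{\Gamma(s')}\int_0^\infty t^{s'-1}g(t)\,dt,\qquad g(t)=\frac{1}{N^2}\sum_{i,j}\mathcal{H}_t(x_i,x_j),
\]
with $\mathcal{H}_t$ the mean-zero heat kernel. This is the true source of the Gamma factors, not the spectral zeta function. One then sets $\epsilon:=\text{wce}(\boldsymbol{x}_N;s)^{2/s}\le1$ and splits the $t$-integral over $]0,\epsilon]$, $]\epsilon,1]$, $]1,\infty[$. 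On $]\epsilon,\infty[$ the comparison $t^{s'-1}\le\epsilon^{s'-s}t^{s-1}$ converts the $s'$-integral into the $s$-integral times $\epsilon^{s'-s}$, giving the $\Gamma(s)/\Gamma(s')$ term. The small-$t$ regime $]0,\epsilon]$ is controlled by the Li--Yau parabolic Harnack inequality $tK_t\le\epsilon K_\epsilon$ (valid on manifolds of nonnegative Ricci curvature), together with the elementary bound $g(t)\le\sup_{\lambda>0}\lambda^s e^{-\lambda t}=s^s e^{-s}t^{-s}$; this produces the $(s^s e^{-s}+1)/(\Gamma(s')(s'-1))$ term. The final numerical check at $s'=2$, $s=2+\epsilon$ is then a direct evaluation.
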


\begin{proof}
The main difference with \cite[Lemma 26]{b-s-s-w} is that the constant
$c(s,s')$ in \cite[Lemma 26]{b-s-s-w} depends on a non-explicit
constant $c$ such that for $t\in]0,\epsilon/2[$ the heat kernel
$K_{t}$ satisfies $tK_{t}\leq c\epsilon K_{\epsilon}$ on $X\times X.$
Here we observe that one can, in fact, take $c=1$ and allow $t\in]0,\epsilon[,$
i.e.
\begin{equation}
t\in]0,\epsilon]\implies tK_{t}\leq\epsilon K_{\epsilon}\label{eq:li-yau}
\end{equation}
Indeed, this follows from the Li-Yau parabolic Harnack-inequality
on Riemannian manifolds with non-negative Ricci curvature (apply \cite[Thm 2.3]{l-y}
to $u(x):=K_{t}(x,y)$ for $y$ fixed and $\alpha=1).$ Since the
rest of the proof proceeds essentially as in \cite[Lemma 26]{b-s-s-w},
mutatis mutandis, we will be rather brief. The starting point is the
formula
\begin{equation}
\text{wce }(\boldsymbol{x}_{N};s')^{2}=\frac{1}{\Gamma(s')}\int_{0}^{\infty}t^{s'-1}g(t)dt,\,\,\,g(t)=\frac{1}{N^{2}}\sum_{1\leq i,j\leq N}\mathcal{H}_{t}(x_{i},x_{j})\label{eq:wce in terms of heat}
\end{equation}
where $\mathcal{H}_{t}$ denotes the heat-kernel $K_{t}$ with the
constant term removed, which follows from formula \ref{eq:sobolev is wce}
together with formula \ref{eq:def of scalar s} applied to $-s$ and
the identity $\lambda^{-p}=\int_{0}^{\infty}e^{-t\lambda}t^{p-1}dt/\Gamma(p)$
(note that the formula in \cite{b-s-s-w} corresponding to \ref{eq:wce in terms of heat}
contains a factor $e^{-t}$ due to the different definition of the
Sobolev norms in \cite{b-s-s-w}). Set $\epsilon:=\text{wce }(\boldsymbol{x}_{N};s)^{2/s}(\leq1)$
and split the integral over $t$ above over the three disjoint regions
$]1,\infty[,$ $]\epsilon,1]$ and $]0,\epsilon]$ (in \cite[Lemma 26]{b-s-s-w}
the regions are defined by replacing $\epsilon$ with $\epsilon/2,$
but here we can take $\epsilon$ since we will use the sharper estimate
\ref{eq:li-yau}). First note that
\[
\frac{1}{\Gamma(s')}\int_{1}^{\infty}t^{s'-1}g(t)dt\leq\frac{\Gamma(s)}{\Gamma(s')}\frac{1}{\Gamma(s)}\int_{1}^{\infty}t^{s-1}g(t)dt
\]
and similarly
\[
\frac{1}{\Gamma(s')}\int_{\epsilon}^{1}t^{s'-1}g(t)dt=\epsilon^{s'}\frac{1}{\Gamma(s)}\int_{\epsilon}^{1}(t/\epsilon)^{s'-1}g(t)dt\leq\epsilon^{s'-s}\frac{\Gamma(s)}{\Gamma(s')}\frac{1}{\Gamma(s)}\int_{\epsilon}^{1}t^{s-1}g(t)dt
\]
Since $1\leq\epsilon^{s'-s}$ it follows that 
\[
\frac{1}{\Gamma(s')}\int_{\epsilon}^{\infty}t^{s'-1}g(t)dt\leq\frac{\Gamma(s)}{\Gamma(s')}\epsilon^{s'-s}\frac{1}{\Gamma(s)}\int_{1}^{\infty}t^{s'-1}g(t)dt\leq\frac{\Gamma(s)}{\Gamma(s')}\epsilon^{s'}
\]
Next, using \ref{eq:li-yau}, one gets, precisely as in the proof
of \cite[Lemma 26]{b-s-s-w}, that
\[
\frac{1}{\Gamma(s')}\int_{0}^{\epsilon}t^{s'-1}g(t)dt\leq\epsilon^{s'}\frac{1}{\Gamma(s')(s'-1)}\sup_{t\in[0,1]}g(t)
\]
Finally, one shows essentially as in the proof of \cite[Lemma 26]{b-s-s-w},
that 
\[
0<g(t)\leq t^{s}\sup_{\lambda\in]0,\infty[}\lambda^{s}e^{-\lambda t}=s^{s}e^{-s}
\]
using that the unique maximum is attained at $\lambda=s/t.$ Adding
up the contributions thus concludes the proof of the inequality. Setting
$s'=2$ gives $c(2,s)^{2}=\Gamma(s)+1+s^{s}e^{-s}$ and hence, if
$\epsilon\leq0.15,$ then $c(2,2+\epsilon)\leq\sqrt{1.073+(2.15/e)^{2.15}+1}\leq1.634<e^{1/2}.$ 
\end{proof}
\begin{lem}
\label{lem:bounds on zeta for sphere}On the two-sphere $(X,g)$ the
following inequality holds for any $\epsilon>0$
\[
\text{Tr}(\Delta_{g}^{-(1+\epsilon)})\leq\frac{1}{\epsilon}+2,\,\,\,\text{Tr}(\Delta_{g}^{-2})=1
\]
 and hence 
\[
-\log\det(I-\lambda\Delta_{g}^{-(1+\epsilon)})\leq\lambda\frac{1}{\epsilon}-4\log(1-\frac{\lambda}{2}).
\]
 
\end{lem}

\begin{proof}
Setting $\mathcal{Z}(p):=\text{Tr}(\Delta_{g}^{-p}),$ for $p\geq1,$
we have 
\[
\mathcal{Z}(p)=\sum_{l=1}^{\infty}\frac{2l+1}{l^{p}(l+1)^{p}}=2\sum_{l=1}^{\infty}\frac{1}{l^{p-1}(l+1)^{p}}+\sum_{l=1}^{\infty}\frac{1}{l^{p}(l+1)^{p}}\leq2\sum_{l=1}^{\infty}\frac{1}{l^{p-1}(l+1)^{p}}+1,
\]
 where the second sum was estimated by replacing $p$ with $1$ to
get a a telescoping sum. Next, 
\[
\sum_{l=1}^{\infty}\frac{1}{l^{p-1}(l+1)^{p}}\leq\frac{1}{2}+\sum_{l=2}^{\infty}\frac{1}{l^{p-1}(l+1)^{p}}\leq\frac{1}{2}+\sum_{l=2}^{\infty}\frac{1}{l^{2p-1}}=\frac{1}{2}+\zeta(2p-1)-1
\]
(using the trivial bound $l+1\geq l),$ where $\zeta(s)$ is the Riemann
zeta function. Set $s=1+\delta.$ As is well-known, when $s>1,$ a
resummation argument gives (cf. \cite[formula 3]{g-s})
\[
\zeta(s)\leq\frac{s}{s-1}=1+\frac{1}{\delta},
\]
Hence, setting $p=1+\epsilon,$ gives $\zeta(2p-1)=\zeta(1+2\epsilon)\leq1+1/(2\epsilon).$
All in all, this means that 
\[
\mathcal{Z}(p)\leq2\left(\frac{1}{2}+(1+\frac{1}{2\epsilon})-1\right)+1=2+\frac{1}{\epsilon},
\]
proving the first inequality in the lemma. Next, note that $\mathcal{Z}(2)$
can be computed as a telescoping sum:
\[
\mathcal{Z}(2)=\sum_{l=1}^{\infty}\frac{2l+1}{l^{2}(l+1)^{2}}=\sum_{l=1}^{\infty}\frac{(l+1)^{2}-l^{2}}{l^{2}(l+1)^{2}}=\sum_{l=1}^{\infty}\frac{1}{l^{2}}-\frac{1}{(l+1)^{2}}=1+0.
\]
 Now, Taylor expanding $-\log\det(I-\lambda\Delta_{g}^{-(1+\epsilon)})$
(as in formula \ref{eq:taylor expansion}) gives
\[
-\log\det(I-\lambda\Delta_{g}^{-(1+\epsilon)})=\sum_{m=1}^{\infty}\frac{\lambda^{m}}{m}\text{Tr}(\Delta^{-m(1+\epsilon)})\leq\lambda\left(\frac{1}{\epsilon}+C_{0}\right)+\sum_{m=2}^{\infty}\frac{\lambda^{m}}{m}\text{Tr}(\Delta^{-m(1+\epsilon)}).
\]
 Since the smallest non-negative eigenvalue $\lambda_{1}$ of $\Delta_{g}$
is equal to $2$ we have that, for $m\geq2,$ 
\[
\text{Tr}(\Delta_{g}^{-m(1+\epsilon)})\leq2^{2}2^{-m}\text{Tr}(\Delta_{g}^{-2})\leq2^{2}2^{-m}\cdot1.
\]
As a consequence, 
\[
\sum_{m=2}^{\infty}\frac{\lambda^{m}}{m}\text{Tr}(\Delta_{g}^{-m(1+\epsilon)})\leq2^{2}\sum_{m=2}^{\infty}(\frac{\lambda}{2})^{m}\frac{1}{m}=2^{2}\left(-\log(1-\frac{\lambda}{2})-\frac{\lambda}{2}\right),
\]
 using again that the Taylor expansion of $-\log(1-\lambda t)$ equals
$\sum_{m=1}^{\infty}\frac{\lambda^{m}}{m}.$ All in all, this means
that $-\log\det(I-\lambda\Delta_{g}^{-(1+\epsilon)})$ is bounded
from above by
\[
\lambda\left(\frac{1}{\epsilon}+2\right)-2\lambda-2^{2}\log(1-\frac{\lambda}{2})=\lambda\frac{1}{\epsilon}-4\log(1-\frac{\lambda}{2}),
\]
 
\end{proof}
\begin{rem}
By \cite[Prop 5]{mo} $\text{Tr}(\Delta_{g}^{-(1+\epsilon)})=1/\epsilon+2\gamma-1+o(1)$
as as $\epsilon\rightarrow0^{+},$ where $\gamma=0.577...$ is Euler's
constant. But the point of the previous lemma is to bound the error
term explicitly for $\epsilon$ fixed. 
\end{rem}

\section{Proofs of the main results}

Fix $s>2$ and a positive integer $N.$ Consider the following $H_{0}^{-s}(X)-$valued
random variable on $(X^{N},d\P_{N}):$ 
\[
Y_{N}:=N(\delta_{N}-d\sigma):\,\,\,(X^{N},d\P_{N})\rightarrow H_{0}^{-s}(X),
\]
 where $\delta_{N}$ denotes the empirical measure \ref{eq:emp measure}
(the space $H_{0}^{-s}(X)$ contains the image of $Y_{N}$ for any
$s>\text{1, }$ but the restriction to $s>2$ will turn out to be
important in the following). To keep things as elementary as possible
it will be convenient to consider truncated random variable taking
values in finite dimensional approximations of $H_{0}^{-s}(X)$ (but
a more direct approach could also be used; see Remark \ref{rem:gff}).
To this end fix an orthonormal basis $\nu_{i}$ in the Hilbert space
$\left\langle H_{0}^{-s}(X),\left\langle \cdot,\cdot\right\rangle _{-s}\right\rangle .$
It will be convenient to take $\nu_{i}$ as in Lemma \ref{lem:expansion in terms of eig}
ordered so that $0<\lambda_{1}\leq...\leq\lambda_{M}.$ Let $H_{\leq M}^{-s}(X)$
be the $M-$dimensional subspace of $H_{0}^{-s}(X)$ defined by 
\[
H_{\leq M}^{-s}(X):=\R\nu_{1}\oplus\cdots\oplus\R\nu_{M}\Subset H_{\leq M}^{-s}(X)
\]
Denote by $\pi_{M}$ the orthogonal projection from the Hilbert space
$H_{0}^{-s}(X)$ onto the $M-$dimensional subspace $H_{\leq M}^{-s}(X).$

\subsection*{Step 1: $\pi_{M}(Y_{N})$ is a sub-Gaussian random variable}

We can view $\pi_{M}(Y_{N})$ as a random variable on $(X^{N},d\P_{N})$
taking values in $H_{\leq M}^{-s}(X):$ 
\[
\pi_{M}(Y_{N}):\,\,\,(X^{N},d\P_{N})\rightarrow H_{\leq M}^{-s}(X).
\]
The first step of the proof is to compare $\pi_{M}(Y_{N})$ with a
\emph{Gaussian} random variable $G_{M}$ taking values in $H_{\leq M}^{-s}(X).$
To this end we first endow $H_{\leq M}^{-s}(X)$ with the Hilbert
structure define by the scalar product $\left\langle \cdot,\cdot\right\rangle _{-1}$
and denote by $\gamma_{M}$ the Gaussian measure on $\left\langle H_{\leq M}^{-s}(X),\left\langle \cdot,\cdot\right\rangle _{-1}\right\rangle .$
Concretely, this means that under any linear isometry of $\left\langle H_{\leq M}^{-s}(X),\left\langle \cdot,\cdot\right\rangle _{-1}\right\rangle $
with $\R^{N}$ the measure $\gamma_{M}$ corresponds to the standard
centered Gaussian measure on $\R^{N}.$ Now define $G_{M}$ as a random
element in $H_{\leq M}^{-s}(X).$ In other words, $G_{M}$ is the
random variable defined by the identity map 
\[
G_{M}:=I\,\,\,(H_{\leq M}^{-s}(X),\gamma_{M})\rightarrow H_{\leq M}^{-s}(X)
\]
The following proposition says that the moment generating function
of the random variable $\pi_{M}(Y_{N}),$ viewed as a function on
the linear dual $(H_{\leq M}^{-s}(X))^{*}$ of $H_{\leq M}^{-s}(X)$
is bounded from above by the moment generating function of the scaled
Gaussian random variable $G_{M}/(4\pi)^{1/2}.$
\begin{prop}
\label{prop:sub-gaussian ineq for trunc}The following inequality
holds: 
\begin{equation}
\E(e^{\left\langle \pi_{M}(Y_{N}),\cdot\right\rangle })\leq\E(e^{\left\langle \frac{1}{\sqrt{4\pi}}G_{M},\cdot\right\rangle })\label{eq:sub-gauss wrt gff}
\end{equation}
Equivalently, denoting by $p_{N}^{(M)}$ the law of $\pi_{M}(Y_{N}),$
i.e. the probability measure on $H_{\leq M}^{-s}(X)$ defined by the
push-forward of $d\P_{N}$ under the map $\pi_{M}(Y_{N}),$ we have
\[
L[p_{N}^{(M)}]\leq L[F_{*}\gamma_{M}],\,\,\,\,\,F(v):=\frac{v}{\sqrt{4\pi}}
\]
 where $L[\Gamma]$ denote the Laplace transform of a measure $\Gamma$
on the finite dimensional vector space $V:=H_{\leq M}^{-s}(X),$ i.e.
$L[\Gamma]$ is the function on $V^{*}$ defined by 
\[
L[\Gamma](w)=\int_{V}e^{-w}\Gamma
\]
\end{prop}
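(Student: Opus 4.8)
The plan is to reduce \eqref{eq:sub-gauss wrt gff} to a scalar (one‑dimensional) estimate for each test functional and then to read that estimate off the Moser--Trudinger-type inequality \eqref{eq:spherical m-t intro}. First I would identify
\[
\bigl(H_{\leq M}^{-s}(X)\bigr)^{*}\;\cong\;W_{M}:=\R f_{1}\oplus\cdots\oplus\R f_{M}\ \subset\ C^{\infty}(X)/\R,
\]
where $f_{1},\dots,f_{M}$ are the $L^{2}$-normalized eigenfunctions of $\Delta_{g}$ underlying the basis $\nu_{i}$ of Lemma \ref{lem:expansion in terms of eig}, each $\psi\in W_{M}$ acting on distributions by the natural pairing $\langle\cdot,\psi\rangle$. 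The key observation is that every $\psi\in W_{M}$ annihilates $\ker\pi_{M}$: the latter is the $\langle\cdot,\cdot\rangle_{-s}$-orthogonal complement of $H_{\leq M}^{-s}(X)$, i.e. the closed span of the distributions $f_{i}\,dV_{g}$ with $i>M$, and $\langle f_{i}\,dV_{g},\psi\rangle=\int_{X}\psi f_{i}\,dV_{g}=0$ for $i>M$ by $L^{2}$-orthonormality of the eigenfunctions. Hence $\langle\pi_{M}(\mu),\psi\rangle=\langle\mu,\psi\rangle$ for all $\mu\in H_{0}^{-s}(X)$, so that
\[
\E\bigl(e^{\langle\pi_{M}(Y_{N}),\psi\rangle}\bigr)=\E\bigl(e^{\langle Y_{N},\psi\rangle}\bigr)=\E_{N}\Bigl(\exp\bigl(\textstyle\sum_{i=1}^{N}\psi(x_{i})-N\!\int_{X}\psi\,d\sigma\bigr)\Bigr)<\infty ,
\]
finiteness being clear as $\psi$ is bounded on the compact $X$. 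It therefore suffices to prove, for every $\psi\in W_{M}$, that this quantity is at most $\E\bigl(e^{\langle G_{M}/\sqrt{4\pi},\,\psi\rangle}\bigr)$.

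For the upper bound I would invoke \eqref{eq:spherical m-t intro}. A function $\psi=\sum_{i\le M}a_{i}f_{i}\in W_{M}$ is smooth and mean-zero, with $\|\psi\|_{H^{1}}^{2}=\langle\psi,\psi\rangle_{1}=\sum_{i\le M}\lambda_{i}a_{i}^{2}$. Since $N(N+1)\tfrac1N\sum_{i}h(x_{i})=(N+1)\langle Y_{N},h\rangle$ for mean-zero $h$, and since \eqref{eq:spherical m-t intro} is homogeneous of degree two in the (normalized) test function, it reads: for every mean-zero $h\in H^{1}(X)$,
\[
\E_{N}\bigl(e^{(N+1)\langle Y_{N},h\rangle}\bigr)\ \le\ \exp\!\Bigl(\tfrac{N(N+1)}{8\pi}\,\|h\|_{H^{1}}^{2}\Bigr).
\]
Taking $h:=\psi/(N+1)$ gives
\[
\E\bigl(e^{\langle Y_{N},\psi\rangle}\bigr)\ \le\ \exp\!\Bigl(\tfrac{N}{N+1}\cdot\tfrac{\|\psi\|_{H^{1}}^{2}}{8\pi}\Bigr)\ \le\ \exp\!\Bigl(\tfrac{\|\psi\|_{H^{1}}^{2}}{8\pi}\Bigr),
\]
using $N/(N+1)<1$; discarding this harmless factor is exactly what yields a constant $1/(4\pi)$ in front of $G_{M}$ with no $N$-dependence.

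Finally I would identify the right-hand side with the same quantity. Fixing a linear isometry $(H_{\leq M}^{-s}(X),\langle\cdot,\cdot\rangle_{-1})\cong\R^{M}$, the scalar random variable $\langle G_{M},\psi\rangle$ is centered Gaussian whose variance equals the square of the operator norm of $\psi$, viewed as a functional on the Hilbert space $(H_{\leq M}^{-s}(X),\langle\cdot,\cdot\rangle_{-1})$. By Lemma \ref{lem:expansion in terms of eig} the distributions $f_{i}\,dV_{g}$ are $\langle\cdot,\cdot\rangle_{-1}$-orthogonal with $\langle f_{i}\,dV_{g},f_{i}\,dV_{g}\rangle_{-1}=\lambda_{i}^{-1}$, while $\psi=\sum_{i\le M}a_{i}f_{i}$ pairs with $\mu=\sum_{i\le M}c_{i}f_{i}\,dV_{g}$ as $\sum_{i\le M}a_{i}c_{i}$; Cauchy--Schwarz then gives that this operator norm squared is $\sum_{i\le M}\lambda_{i}a_{i}^{2}=\|\psi\|_{H^{1}}^{2}$. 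Consequently $\langle G_{M}/\sqrt{4\pi},\psi\rangle\sim\mathcal{N}\!\bigl(0,\tfrac1{4\pi}\|\psi\|_{H^{1}}^{2}\bigr)$ and
\[
\E\bigl(e^{\langle G_{M}/\sqrt{4\pi},\,\psi\rangle}\bigr)=\exp\!\Bigl(\tfrac{\|\psi\|_{H^{1}}^{2}}{8\pi}\Bigr),
\]
matching the upper bound, which proves \eqref{eq:sub-gauss wrt gff}; the equivalent formulation in terms of Laplace transforms is the same inequality with $-w$ in place of $\psi$ (the argument works for every test functional, of either sign). I expect the only real obstacle to be bookkeeping: getting the identification $\bigl(H_{\leq M}^{-s}(X)\bigr)^{*}\cong W_{M}$ together with the compatibility of $\pi_{M}$ with the distributional pairing right — which is precisely where the special choice of basis $\nu_{i}\propto f_{i}\,dV_{g}$ of Lemma \ref{lem:expansion in terms of eig} is used — and tracking the $N(N+1)$-versus-$(N+1)^{2}$ scaling when invoking \eqref{eq:spherical m-t intro}; no genuinely hard analysis enters, all of it being encapsulated in \eqref{eq:spherical m-t intro}.
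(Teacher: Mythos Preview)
Your proof is correct and follows essentially the same route as the paper's: both identify the dual of $H_{\leq M}^{-s}(X)$ with a finite-dimensional space of smooth test functions on which $\langle\pi_{M}(Y_{N}),\psi\rangle=\langle Y_{N},\psi\rangle$, apply the Moser--Trudinger-type inequality \eqref{eq:spherical m-t intro} with the rescaled test function $\psi/(N+1)$ to get $\E(e^{\langle Y_{N},\psi\rangle})\leq\exp\bigl(\tfrac{N}{N+1}\cdot\tfrac{1}{8\pi}\|\psi\|_{H^{1}}^{2}\bigr)$, and then recognize the right-hand side (after dropping $N/(N+1)\le1$) as the moment generating function of the Gaussian $G_{M}/\sqrt{4\pi}$. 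Your treatment is somewhat more explicit about the duality bookkeeping and the variance computation for $\langle G_{M},\psi\rangle$ via the eigenbasis, while the paper packages the latter as the standard formula $L[\gamma](w)=\exp(\tfrac12\|w\|_{H^{*}}^{2})$; the substance is identical.
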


\begin{proof}
Applying the Moser-Trudinger type inequality \ref{eq:spherical m-t intro}
for the spherical ensemble proved in \cite{berm3} to $u=w/N+1$ for
$w\in H^{1}(X)$ gives
\[
\E(e^{\left\langle Y_{N},w\right\rangle })\leq e^{\frac{N(N\text{+1)}}{(N+1)(N\text{\textasciiacute+1)}}\frac{1}{2}\frac{1}{4\pi}\left\Vert w\right\Vert _{H^{1}(X)}^{2}}\leq e^{\frac{1}{2}\frac{1}{4\pi}\left\Vert w\right\Vert _{H^{1}(X)}^{2}}
\]
In particular, taking $w\in(H_{\leq M}^{-s}(X))^{*},$ identified
with a subspace of $H^{1}(X)/\R,$ gives $\left\langle Y_{N},w\right\rangle =\left\langle \pi_{M}(Y_{N}),w\right\rangle $
and hence it will be enough to verify that 
\begin{equation}
e^{\frac{1}{2}\left\Vert w\right\Vert _{H^{1}(X)}^{2}}=\E(e^{\left\langle G_{M},w\right\rangle })(=L[\gamma_{N}](-w)\label{eq:Laplace transform of gaussian in particular case}
\end{equation}
 To this end first note that under the identifications above $\left\Vert w\right\Vert _{H^{1}(X)}^{2}$
coincides with the dual norm on the Hilbert space dual of $\left\langle H_{\leq M}^{-s}(X),\left\langle \cdot,\cdot\right\rangle _{-1}\right\rangle .$
But then formula \ref{eq:Laplace transform of gaussian in particular case}
follows from the well-known fact that if $\gamma$ is the Gaussian
measure on a finite dimensional Hilbert space $H,$ then
\begin{equation}
L[\gamma](w)=\exp(\frac{1}{2}\left\Vert w\right\Vert _{H^{*}}^{2}).\label{eq:Laplace transf of gamma for general finite d Hilb}
\end{equation}
 Indeed, fixing an orthonormal basis in $H$ this reduces to the basic
fact that the Laplace transform of the measure $e^{-|x|^{2}/2}dx$
on Euclidean $\R^{N}$ is equal to $e^{|y|^{2}/2},$ which, in turn,
follows from ``completing the square''. 
\end{proof}
\begin{rem}
\label{rem:gff}In the terminology introduced by Kahane, the previous
inequality says that the random variable $\pi_{M}(Y_{N})$ is \emph{sub-Gaussian}
with respect to the Gaussian random variable $\frac{1}{\sqrt{4\pi}}G_{M}.$
In fact, by letting $M\rightarrow\infty$ this implies that $Y_{N}$
is sub-Gaussian with respect to the Laplacian of the Gaussian free
field \cite{she}, viewed as random variables taking values in $H_{0}^{2+\epsilon}(X).$
This point of view will be elaborated on in \cite{berm17}.
\end{rem}

\subsection*{Step 2: Bounding $\E(e^{\left\Vert \pi_{M}(Y_{N})\right\Vert _{-s}^{2}})$}

We start with the following general
\begin{lem}
\label{lem:ineq for exp q in finite dim}Let $H$ be a finite dimensional
Hilbert space and denote by $\gamma$ the corresponding Gaussian measure.
If $\Gamma$ is a measure on $H$ such that the following inequality
of Laplace transforms hold
\[
L[\Gamma]\leq L[\gamma]
\]
as functions on the dual vector space $H^{*},$ then 
\begin{equation}
\int e^{q}\Gamma\leq\int e^{q}\gamma\label{eq:ineq for exp q in lemma}
\end{equation}
 for the squared semi-norm $q(v)$ defined by a given semi-positive
symmetric bilinear form on $H.$ 
\end{lem}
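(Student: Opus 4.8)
The plan is to reduce the claimed inequality \ref{eq:ineq for exp q in lemma} to the hypothesis $L[\Gamma]\le L[\gamma]$ by writing $e^{q}$ as an average (a superposition) of exponentials $e^{\langle v,w\rangle}$ over a suitable Gaussian family of linear functionals $w$, and then interchanging the order of integration. Concretely, since $q$ is the square of a semi-norm coming from a semi-positive symmetric bilinear form $b$ on $H$, diagonalise $b$ and reduce to the case $q(v)=\tfrac12|Av|^{2}$ for a self-adjoint $A\ge 0$ on $H$ (the factor $\tfrac12$ is harmless and just fixes normalisations). The elementary identity to exploit is the Gaussian representation of an exponential of a square: for a standard Gaussian variable $Z$ on a finite-dimensional Hilbert space $H$ one has $\mathbb{E}_{Z}\big(e^{\langle v, AZ\rangle}\big)=e^{\tfrac12|Av|^{2}}$, i.e. $e^{q(v)}=\int_{H} e^{\langle v, Aw\rangle}\, d\gamma(w)$. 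This is exactly the ``completing the square'' computation already invoked in the proof of Proposition \ref{prop:sub-gaussian ineq for trunc} (formula \ref{eq:Laplace transf of gamma for general finite d Hilb}), now read in the opposite direction.

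Given this, I would integrate against $\Gamma$ and use Tonelli (everything is nonnegative) to swap the two integrals:
\[
\int_{H} e^{q}\, d\Gamma \;=\; \int_{H}\!\!\int_{H} e^{\langle v, Aw\rangle}\, d\gamma(w)\, d\Gamma(v) \;=\; \int_{H} \Big(\int_{H} e^{\langle v, Aw\rangle}\, d\Gamma(v)\Big) d\gamma(w) \;=\; \int_{H} L[\Gamma](-Aw)\, d\gamma(w).
\]
Now apply the hypothesis $L[\Gamma]\le L[\gamma]$ pointwise on $H^{*}$ (here at the points $-Aw$), obtaining $\int_{H} e^{q}\, d\Gamma \le \int_{H} L[\gamma](-Aw)\, d\gamma(w)$, and then run the identical chain of equalities backwards with $\gamma$ in place of $\Gamma$ to recognise the right-hand side as $\int_{H} e^{q}\, d\gamma$. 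This proves \ref{eq:ineq for exp q in lemma}. One should note that $\Gamma$ need not be a probability measure a priori, but the hypothesis $L[\Gamma]\le L[\gamma]$ evaluated at $w=0$ forces $\Gamma(H)\le 1<\infty$, so all the integrals above are finite and Tonelli applies without fuss; and if $L[\gamma]\le \infty$ is ever an issue it only helps the inequality.

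The only genuinely delicate point — the ``main obstacle,'' though it is minor — is bookkeeping the identifications between $H$ and $H^{*}$ and making sure the Gaussian measure $\gamma$ used to represent $e^{q}$ is the same $\gamma$ that appears in the hypothesis, or at least that the self-adjoint operator $A$ is handled consistently on both sides. The clean way is: fix once and for all an orthonormal basis of $H$, identify $H\cong H^{*}\cong\mathbb{R}^{n}$, let $\gamma$ be the standard Gaussian $e^{-|x|^{2}/2}dx/(2\pi)^{n/2}$, write the bilinear form as $q(v)=\tfrac12\langle Bv,v\rangle$ with $B=A^{2}\ge 0$, and carry out the above with $A=B^{1/2}$. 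Then every step is a one-line Gaussian computation and the inequality $L[\Gamma]\le L[\gamma]$ is used exactly once, at the family of arguments $\{-Aw : w\in H\}$. With this lemma in hand, Step 2 follows by applying it to $H=H^{-s}_{\le M}(X)$ with $\gamma$ the law of $\tfrac{1}{\sqrt{4\pi}}G_{M}$, $\Gamma=p^{(M)}_{N}$, and $q=\|\cdot\|_{-s}^{2}$, and then letting $M\to\infty$ together with the spectral formula of Lemma \ref{lem:expansion in terms of eig} to identify $\int e^{q}\, d\gamma$ with the Fredholm determinant appearing in Theorem \ref{thm:moment bound for wce}.
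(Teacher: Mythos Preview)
Your proof is correct and is essentially the same argument as the paper's: both write $e^{q(v)}$ as a superposition $\int e^{\langle v,w\rangle}\,d\mu(w)$ of exponentials of linear functionals against a positive (Gaussian-type) measure, swap the order of integration by Tonelli, apply the hypothesis $L[\Gamma]\le L[\gamma]$ pointwise, and swap back. The only cosmetic difference is that the paper first states the argument abstractly for any $q$ with $e^{q}=L[\mu_{q}]$ for some positive measure $\mu_{q}$ and then identifies $\mu_{q}$ in coordinates as $\exp(-\sum |a_{i}^{-1}y_{i}|^{2}/2)\,dy$, whereas you go directly to the representation $e^{q(v)}=\int e^{\langle v,Aw\rangle}\,d\gamma(w)$ via the pushforward $A_{*}\gamma$; your formulation has the mild advantage of handling the degenerate (strictly semi-definite) case cleanly, since $A_{*}\gamma$ is well-defined even when $A$ has a kernel.
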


\begin{proof}
First observe that the inequality \ref{eq:ineq for exp q in lemma}
holds for any function $q$ on $H$ which has the following positivity
property: $e^{q}$ is the Laplace transform of a positive measure
$\mu_{q}$ on $H^{*},$ i.e. 
\[
e^{q(v)}=\int_{w\in H^{*}}e^{-\left\langle v,w\right\rangle }d\mu_{q}(w)
\]
Indeed, changing the order of integration (using Fubini) the integral
of $e^{q}$ against $\Gamma$ may be expressed as
\[
\int_{v\in H}\left(\int_{w\in H^{*}}e^{-\left\langle v,w\right\rangle }d\mu_{q}(w)\right)d\Gamma(v)=\int_{w\in H^{*}}\left(\int_{v\in H}e^{-\left\langle v,w\right\rangle }d\Gamma(v)\right)d\mu_{q}(w).
\]
 Hence, by assumption, 
\[
\int_{v\in H}e^{q(v)}d\Gamma(v)\leq\int_{w\in H^{*}}\left(\int_{v\in H}e^{-\left\langle v,w\right\rangle }d\gamma(v)\right)d\mu_{q}(w),
\]
 which is equal to $\int e^{q}\gamma$ (as seen by changing the order
of integration again). All that remains is thus to verify the positivity
property in question when $q$ is a squared semi-norm. Identifying
$H$ with Euclidean $\R^{M}$ and diagonalizing $q$ we may as well
assume that $q=\sum|a_{i}x_{i}|^{2}/2$ for $a_{i}\geq0.$ But then
it follows from formula \ref{eq:Laplace transf of gamma for general finite d Hilb}
and scaling the variables that the measure $d\mu_{q}=\exp\left(-\sum|a_{i}^{-1}y_{i}|^{2}/2\right)dy_{1}...dy_{M}$
has the required property. 
\end{proof}
In the present situation we get the following
\begin{prop}
\label{prop:ineq for truncated norm}For any $\alpha>0$ the following
inequality holds: 
\begin{equation}
\E\left(e^{\alpha\left\Vert \pi_{M}(Y_{N})\right\Vert _{-s}^{2}}\right)\leq\E\left(e^{\frac{\alpha}{4\pi}\left\Vert G_{M}\right\Vert _{-s}^{2}}\right)=\prod_{i\leq M}\left(1-\frac{\alpha}{4\pi}\lambda_{i}^{-(s-1)}\right)^{-1/2}\label{eq:sub-gauss wrt gff-1}
\end{equation}
\end{prop}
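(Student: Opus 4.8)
The plan is to combine the sub-Gaussian comparison from Step 1 (Proposition \ref{prop:sub-gaussian ineq for trunc}) with the abstract Laplace-transform comparison lemma (Lemma \ref{lem:ineq for exp q in finite dim}), and then to evaluate the resulting Gaussian integral explicitly. First I would set $H:=H_{\leq M}^{-s}(X)$ equipped with the Hilbert structure $\left\langle \cdot,\cdot\right\rangle_{-1}$, so that $\gamma_M$ is the associated Gaussian measure and $G_M$ the corresponding Gaussian random variable, exactly as in Step 1. By Proposition \ref{prop:sub-gaussian ineq for trunc}, the law $p_N^{(M)}$ of $\pi_M(Y_N)$ satisfies $L[p_N^{(M)}]\leq L[F_*\gamma_M]$ with $F(v)=v/\sqrt{4\pi}$. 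Taking $q$ to be the squared semi-norm $q(v):=\alpha\left\Vert v\right\Vert_{-s}^2$ on $H$ — which is indeed a nonnegative quadratic form, since $\left\Vert\cdot\right\Vert_{-s}$ is a genuine norm on the finite-dimensional space $H$ — Lemma \ref{lem:ineq for exp q in finite dim} (applied with $\Gamma=p_N^{(M)}$ and Gaussian measure $F_*\gamma_M$) immediately gives
\[
\E\left(e^{\alpha\left\Vert\pi_M(Y_N)\right\Vert_{-s}^2}\right)\leq\int e^{q}\,F_*\gamma_M=\E\left(e^{\frac{\alpha}{4\pi}\left\Vert G_M\right\Vert_{-s}^2}\right),
\]
which is the first (inequality) half of \ref{eq:sub-gauss wrt gff-1}. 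One caveat: Lemma \ref{lem:ineq for exp q in finite dim} as stated assumes $L[\Gamma]\leq L[\gamma]$ for a Gaussian $\gamma$; here the comparison measure is $F_*\gamma_M$, which is again a centered Gaussian (with covariance scaled by $1/(4\pi)$), so the hypothesis is met after replacing the scalar product $\left\langle\cdot,\cdot\right\rangle_{-1}$ by $\frac{1}{4\pi}\left\langle\cdot,\cdot\right\rangle_{-1}$, equivalently by absorbing the scaling into $q$ as above. I would spell this out in one sentence.

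It remains to compute the Gaussian integral $\E\left(e^{\frac{\alpha}{4\pi}\left\Vert G_M\right\Vert_{-s}^2}\right)$ explicitly. Here I would diagonalize simultaneously the two quadratic forms $\left\langle\cdot,\cdot\right\rangle_{-1}$ and $\left\langle\cdot,\cdot\right\rangle_{-s}$ on $H$. By Lemma \ref{lem:expansion in terms of eig}, the basis $\nu_i=f_i\,dV_g$ (with $f_i$ the $L^2$-normalized Laplace eigenfunctions, eigenvalues $\lambda_i$) is orthogonal for both $\left\langle\cdot,\cdot\right\rangle_{-1}$ and $\left\langle\cdot,\cdot\right\rangle_{-s}$: writing $v=\sum_{i\leq M}c_i f_i$, one has $\left\Vert v\right\Vert_{-1}^2=\sum\lambda_i^{-1}c_i^2$ and $\left\Vert v\right\Vert_{-s}^2=\sum\lambda_i^{-s}c_i^2$. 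Passing to the coordinates $a_i:=\lambda_i^{-1/2}c_i$, the measure $\gamma_M$ becomes the standard Gaussian $\prod_i e^{-a_i^2/2}\,da_i/\sqrt{2\pi}$ and $\left\Vert v\right\Vert_{-s}^2=\sum_i\lambda_i^{-(s-1)}a_i^2$. Hence
\[
\E\left(e^{\frac{\alpha}{4\pi}\left\Vert G_M\right\Vert_{-s}^2}\right)=\prod_{i\leq M}\frac{1}{\sqrt{2\pi}}\int_{\R}e^{\frac{\alpha}{4\pi}\lambda_i^{-(s-1)}a^2-a^2/2}\,da=\prod_{i\leq M}\left(1-\frac{\alpha}{4\pi}\lambda_i^{-(s-1)}\right)^{-1/2},
\]
the one-dimensional Gaussian integral $\frac{1}{\sqrt{2\pi}}\int e^{(t/2)a^2-a^2/2}da=(1-t)^{-1/2}$ being valid (and finite) exactly when $t<1$. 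Since $\lambda_1\leq\lambda_i$ for all $i\leq M$ and $\lambda_1=2$ on the two-sphere (so $\lambda_i^{-(s-1)}\leq\lambda_1^{-(s-1)}<1$ for $s>1$), the condition $\alpha/(4\pi)\cdot\lambda_i^{-(s-1)}<1$ holds for every $i\leq M$ as soon as $\alpha<4\pi$, which is the stated hypothesis; more precisely one only needs $\alpha<4\pi\lambda_1^{s-1}$, and $\alpha<4\pi$ certainly suffices. This finiteness is also what legitimizes applying Lemma \ref{lem:ineq for exp q in finite dim} — the right-hand integral there must be finite for the inequality to carry content — so I would note the range of $\alpha$ before invoking the lemma.

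The main obstacle, such as it is, is bookkeeping rather than conceptual: one must be careful to keep track of which Hilbert structure ($\left\langle\cdot,\cdot\right\rangle_{-1}$ versus $\left\langle\cdot,\cdot\right\rangle_{-s}$) is being used where, since $\gamma_M$ is defined via the $(-1)$-inner product but the quantity being exponentiated is the $(-s)$-norm, and the whole point of the mismatch $s-1$ in the exponent of $\lambda_i$ in \ref{eq:sub-gauss wrt gff-1} is precisely this discrepancy. The scaling factor $1/(4\pi)$ coming from the sharp Moser–Trudinger constant in \ref{eq:spherical m-t intro} must likewise be threaded through consistently. Once the simultaneous diagonalization is set up cleanly via Lemma \ref{lem:expansion in terms of eig}, everything reduces to the product of scalar Gaussian integrals above, and there is no analytic difficulty. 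I would also remark that letting $M\to\infty$ and using Lemma \ref{lem:bounds on zeta for sphere} to control $\sum_i\lambda_i^{-(s-1)}$ (so that the infinite product converges, equivalently the Fredholm determinant $\det(I-\frac{\alpha}{4\pi}\Delta_g^{-(s-1)})$ is finite) yields Theorem \ref{thm:moment bound for wce} with $s=2+\epsilon$, but that passage to the limit is the content of the subsequent argument and not of this proposition.
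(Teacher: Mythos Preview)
Your proposal is correct and follows essentially the same approach as the paper: combine Proposition~\ref{prop:sub-gaussian ineq for trunc} with Lemma~\ref{lem:ineq for exp q in finite dim} (applied to $\Gamma=p_N^{(M)}$, Gaussian $F_*\gamma_M$, and $q(v)=\alpha\|v\|_{-s}^2$) to obtain the inequality, then diagonalize the two quadratic forms via the Laplace eigenbasis to reduce the Gaussian side to a product of one-dimensional integrals. The only slip is a harmless factor of two in your displayed Gaussian integral: with the standard normal density $e^{-a^2/2}/\sqrt{2\pi}$ one has $\frac{1}{\sqrt{2\pi}}\int e^{ca^2-a^2/2}\,da=(1-2c)^{-1/2}$, so either write the exponent as $\tfrac{1}{2}\cdot\tfrac{\alpha}{4\pi}\lambda_i^{-(s-1)}a^2$ or adopt the $e^{-x^2}/\sqrt{\pi}$ normalization the paper uses in its own computation to land on the stated constant.
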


\begin{proof}
The first inequality follows directly from combining Prop \ref{prop:sub-gaussian ineq for trunc}
and Lemma \ref{lem:ineq for exp q in finite dim} with $\gamma=F_{*}\gamma_{M}$
$q(v):=\alpha\left\Vert v\right\Vert _{-s}^{2}.$ To prove the last
equality denote by $v_{i}$ an orthonormal base in the Hilbert space
$\left(H_{\leq M}^{-s}(X),\left\langle \cdot,\cdot\right\rangle _{-1}\right).$
Given $v\in H_{\leq M}^{-s}(X)$ we decompose $v=\sum_{i=1}^{M}v_{i}x_{i}$
and note that
\[
\left\Vert v\right\Vert _{-s}^{2}=\sum_{i=1}^{M}x_{i}^{2}\lambda_{i}^{(1-s)}
\]
as follows from writing 
\[
\left\Vert v_{i}\right\Vert _{-s}^{2}:=\left\langle \Delta^{-s}v_{i},v_{i}\right\rangle _{L^{2}}=\left\langle \Delta^{-1}\left(\Delta^{1-s}v_{i}\right),v_{i}\right\rangle _{L^{2}}=\lambda_{i}^{(1-s)}\left\langle \Delta^{-1}v_{i},v_{i}\right\rangle _{L^{2}}=\lambda_{i}^{(1-s)}.
\]
Hence, 
\[
\E\left(e^{\frac{\alpha}{4\pi}\left\Vert G_{M}\right\Vert _{-s}^{2}}\right)=\prod_{i\leq M}\int e^{\frac{\alpha}{4\pi}x_{i}^{2}\lambda_{i}^{(1-s)}}e^{-x_{i}^{2}}dx_{i}/\pi^{1/2}
\]
Finally, changing variables $x_{i}\rightarrow\left(1-\frac{\alpha}{4\pi}\lambda_{i}^{(1-s)}\right)^{1/2}$
in the corresponding Gaussian integrals then concludes the proof of
the proposition.
\end{proof}
Letting $M\rightarrow\infty$ and using the monotone convergence theorem
now concludes the proof of Theorem \ref{thm:moment bound for wce}.

\subsection{Proof of Theorem \ref{thm:moment bound for wce} and Theorem \ref{thm:concentr of measure for spherical}}

Set $s=2+\epsilon$ and $\lambda=\alpha/4\pi.$ Using the Taylor expansion
\ref{eq:taylor expansion}
\begin{equation}
\prod_{i\leq M}\left(1-\lambda\lambda_{i}^{-(s-1)}\right)^{-1/2}\leq-\log\det(I-\lambda\Delta_{g}^{-(1+\epsilon)})\leq\exp\left(\frac{1}{2}\sum_{m=1}^{\infty}\lambda^{m}\frac{\text{Tr}(\Delta^{-m(1+\epsilon)})}{m}\right).\label{eq:bound on D M in pf}
\end{equation}
By Lemma \ref{lem:bounds on zeta for sphere},
\[
\sum_{m=1}^{\infty}\frac{\lambda^{m}\text{Tr}(\Delta^{-m(1+\epsilon)})}{m}\leq\lambda\frac{1}{\epsilon}-4\log(1-\frac{\lambda}{2})=:f(\lambda)
\]
 Hence, for any fixed positive integer $M$ Prop \ref{prop:ineq for truncated norm}
gives 
\[
\E_{N}\left(e^{\alpha\left\Vert \pi_{M}(Y_{N})\right\Vert _{-(2+\epsilon)}^{2}}\right)\leq\exp\left(\frac{1}{2}f(\frac{\alpha}{4\pi})\right)
\]
Letting $M\rightarrow\infty$ and using the monotone convergence theorem
we deduce that 
\[
\E_{N}\left(e^{\alpha\left\Vert (Y_{N})\right\Vert _{-(2+\epsilon)}^{2}}\right)\leq\exp\left(\frac{1}{2}f(\frac{\alpha}{4\pi})\right),
\]
 proving Theorem \ref{thm:moment bound for wce}. Finally, by Chebishev's
inequality, we can write $\P_{N}\left(\left\Vert \delta_{N}-\mu_{\phi}\right\Vert _{H^{-s}}>\delta\right)$
as
\[
\P_{N}\left(\alpha\left\Vert Y_{N}\right\Vert _{H^{-(2+\epsilon)}(X)}^{2}>\alpha\delta^{2}N^{2}\right)\leq e^{-\alpha\delta^{2}N^{2}}\E_{N}\left(e^{\alpha\left\Vert (Y_{N})\right\Vert _{-(2+\epsilon)}^{2}}\right)\leq\exp\left(-\alpha\delta^{2}N^{2}+\frac{1}{2}f(\frac{\alpha}{4\pi})\right)
\]
 Thus, taking any non-zero $\alpha<2\cdot4\pi$ proves the inequality
 in Theorem \ref{thm:concentr of measure for spherical}. 

\subsection{The optimal choice of $\alpha$}

Setting $\lambda:=\alpha/4\pi$ we have 
\begin{equation}
\P_{N}\left(\left\Vert \delta_{N}-\mu_{\phi}\right\Vert _{H(X)^{-(2+\epsilon)}}>\delta\right)\leq\exp\left(\frac{1}{2}\left(-8\pi\delta^{2}N^{2}\lambda+f(\lambda)\right)\right),\,\,\,f(\lambda):=\lambda\frac{1}{\epsilon}-4\log(1-\frac{\lambda}{2}).\label{eq:estimate P in optimal alpha}
\end{equation}
 First observe that $f(\lambda)$ is strictly convex on $[0,2[,$
$f(0)=0$ and $f(2^{-})=\infty.$ Hence, the optimal choice of $\lambda$
satisfies
\begin{equation}
8\pi\delta^{2}N^{2}=f'(\lambda)=\epsilon^{-1}+\frac{2}{1-\lambda/2},\,\,\,\lambda\in]0,2[\label{eq:lambda}
\end{equation}
if such a $\lambda$ exists. Introducing the parameters $R\in]0,\infty[$
and $\eta\in]-1,\infty[$ determined by 
\[
R^{2}:=\delta^{2}N^{2}\epsilon,\,\,\,\eta:=8\pi R^{2}-1
\]
the equation \ref{eq:lambda} for $\lambda$ becomes
\[
\eta=\frac{2\epsilon}{1-\lambda/2}\iff\lambda=2\left(1-\frac{2\epsilon}{\eta}\right)
\]
assuming that $R$ is sufficiently large to ensure that $\lambda\in]0,2[,$
i.e. that 
\[
\eta>2\epsilon.
\]
 As a consequence, for this optimal $\lambda,$ the exponent in the
estimate \ref{eq:estimate P in optimal alpha} becomes

\[
-(8\pi\delta^{2}N^{2}\epsilon)\epsilon^{-1}\lambda+f(\lambda)=-(\eta+1)\epsilon^{-1}\lambda+f(\lambda)=\left(-(\eta+1)+1\right)\epsilon^{-1}\lambda+4\log\left((1-\frac{\lambda}{2})^{-1}\right)=
\]
\[
=2\left(-\eta\right)\left(\epsilon^{-1}-\frac{2}{\eta}\right)+4\log\frac{\eta}{2\epsilon}=2\left(-\eta\right)\epsilon^{-1})+4\left(1+\log\frac{\eta}{2\epsilon}\right)
\]
 In particular, taking $\epsilon:=1/\log N$ gives, for any $\eta>2/\log N,$gives
\begin{equation}
\P_{N}\left(\left\Vert \delta_{N}-\mu_{\phi}\right\Vert _{H(X)^{-(2+1/\log N)}}>R\frac{(\log N)^{1/2}}{N}\right)\leq\label{eq:explicit bound text}
\end{equation}
\[
\leq\exp\left(\frac{1}{2}\left(-8\pi\delta^{2}N^{2}\lambda+f(\lambda\right))\right)=\frac{1}{N^{\eta}}(2\log N)^{2}\exp\left(2\left(1+\log\eta\right)\right).
\]

\subsection{Proof of Theorem \ref{thm:main intro}}

By Lemma \ref{lem:compar of w} it will be enough to prove the inequality
for $s=2.$ Consider the real-valued random variable $W_{N}(s):=\text{wce }(\boldsymbol{x}_{N};s)$
on $(X^{N},d\P_{N}).$ Applying Theorem \ref{thm:concentr of measure for spherical}
to $\delta=R\epsilon^{-1/2}/N$ it will be enough to show the following
claim when $\epsilon:=1/\log N$ under the assumption that $\epsilon^{1/2}\leq R\leq N\epsilon^{1/2}:$
\[
\text{claim:\,}W(2+\epsilon)\leq R\frac{\epsilon^{-1/2}}{N}\implies W(2)\leq CR\frac{\epsilon^{-1/2}}{N},\,\,\,C:=e^{1/2}c(2,2+\epsilon),
\]
 where $c(2,2+\epsilon)$ is defined in Lemma \ref{lem:compar of w}.
To this end recall that, by assumption, $W_{N}(2+\epsilon)\leq1$
and hence by Lemma \ref{lem:compar of w} we have, since $\epsilon\leq1$
\[
W(s)\leq cW(2+\epsilon)^{\frac{s}{2+\epsilon}},\,\,\,c=c(2,2+\epsilon)
\]
In particular, 
\[
W(2)\leq cW(2+\epsilon)^{\frac{2}{2+\epsilon}}\leq cW(2+\epsilon)^{(1-\epsilon/2)}
\]
using that $1/(1+t)\geq1-t$ if $t\geq0$ and that $W_{N}(2+\epsilon)\leq1.$
Hence, $W_{N}(2+\epsilon)\leq R\frac{\epsilon^{-1/2}}{N}$ implies
that 
\[
W(2)\leq c(R\frac{\epsilon^{-1/2}}{N})^{(1-\epsilon/2)}=cR\frac{\epsilon^{-1/2}}{N}(R\frac{\epsilon^{-1/2}}{N})^{-\epsilon/2}.
\]
 But, by assumption, $R\epsilon^{-1/2}\geq1$ and hence 
\[
(R\frac{\epsilon^{-1/2}}{N})^{-\epsilon/2}=(R\epsilon^{-1/2})^{-\epsilon/2}N^{\epsilon/2}\leq N^{\epsilon/2}:=\left(N^{1/(\log N)}\right)^{1/2}=e^{1/2},
\]
since $\epsilon:=1/\log N.$ 
\begin{rem}
If, in particular, $1/\log N\leq0.15$ (for, example, $N=1000)$ then
we get, by Lemma \ref{lem:compar of w}, $W(2)\leq e^{1/2}e^{1/2}R(\frac{\log N}{N})^{1/2}\leq eR(\frac{\log N}{N})^{1/2}.$
\end{rem}

\subsection{Explicit formulation of Theorem \ref{thm:main intro}}

Combining the previous remark with the explicit bound in formula \ref{eq:explicit bound text}
gives the following explicit formulation of the first inequality in
Theorem \ref{thm:main intro} 
\[
\P_{N}\left(\text{wce }(\boldsymbol{x}_{N};2)\leq e\sqrt{\frac{1+\eta}{8\pi}}\frac{(\log N)^{1/2}}{N}\right)\geq1-\frac{(2e\eta\log N)^{2}}{N^{\eta}},
\]
 under the assumption that $\eta>2/\log N$ and $N\geq1000.$


\begin{thebibliography}{10}
\bibitem{a-z}Alishahi, K; Zamani, M: The spherical ensemble and uniform
distribution of points on the sphere. Electron. J. Probab. 20 (2015),
no. 23, 27 pp. 

\bibitem{b-h} R Bardenet, A Hardy: Monte Carlo with determinantal
point processes. Ann. Appl. Probab. 30(1): 368-417 (2020) (arXiv:1605.00361,
2016).

\bibitem{berm3}Berman, R.J:\emph{ }Sharp asymptotics for Toeplitz
determinants, fluctuations and the Gaussian free field on a Riemann
surface. Int. Math. Research. notices. (2012)

\bibitem{berm17}Berman, R.J: On convergences rates in Kähler geometry
and Coulomb gases. In preparation.

\bibitem{bo}Boissard, E: Simple bounds for convergence of empirical
and occupation measures in 1-Wasserstein distance. Electron. J. Probab.
16 (2011), no. 83, 2296\textendash 2333. 

\bibitem{b-g-v}Bolley, F; Guillin, A; Villani, C: Quantitative concentration
inequalities for empirical measures on non-compact spaces. Probab.
Theory Related Fields 137 (2007), no. 3-4, 541\textendash 593

\bibitem{b-r-v}Bondarenko, D. Radchenko, and M. Viazovsk: Optimal
asymptotic bounds for spherical designs, Annals of math. 178(2013),
no. 2, 443\textendash 452

\bibitem{b-h-s}Borodachov, S. V., Hardin, D. P.; Saff, E. B: Discrete
energy on rectifiable sets. Springer, 2019

\bibitem{b-d} J.Dick: J. S. Brauchart: Quasi-Monte Carlo rules for
numerical integration over the unit sphere S2. Numer.Math.121(2012),
no. 3, 473\textendash 502, 

\bibitem{b-s-s-w}Brauchart, J. S.; Saff, E. B.; Sloan, I. H.; Womersley,
R. S. QMC designs: optimal order quasi Monte Carlo integration schemes
on the sphere. Math. Comp. 83 (2014), no. 290, 2821\textendash 2851.

\bibitem{b-e-g}A Breger, M Ehler, M Graef: Points on manifolds with
asymptotically optimal covering radius. Journal of Complexity Vol.
48 (2018) 1-14 (arXiv:1607.06899, 2016)

\bibitem{b-c-c-g-s-t}Brandolini, L; Choirat, C; Colzani, L; Gigante,
G; Seri, R; Travaglini, G: Quadrature rules and distribution of points
on manifolds. Ann. Sc. Norm. Super. Pisa Cl. Sci. (5) 13 (2014), no.
4

\bibitem{c-h-m}D Chafaï, A Hardy, M Maïd: Concentration for Coulomb
gases and Coulomb transport inequalities. Journal of Functional Analysis
Volume 275, Issue 6, 15 (2018) Pages 1447-1483

\bibitem{c-f}J. Cui and W. Freeden. Equidistribution on the sphere.
SIAM J. Sci. Comput., 18(2):595\textendash 609, 1997.

\bibitem{d-g-s}P. Delsarte, J. M. Goethals, and J. J. Seidel: Spherical
codes and designs, Geometriae Dedicata 6(1977), no. 3, 363\textendash 388

\bibitem{key-1}Driscoll, J; Healy, D. M., Jr.: Computing Fourier
transforms and convolutions on the 2-sphere. Adv. in Appl. Math. 15
(1994), no. 2, 202\textendash 250.

\bibitem{fo}P. J. Forrester: Log-Gases and Random Matrices. London
Mathematical Society Monographs 34, Princeton University Press, Princeton,
NJ 2010

\bibitem{ga-z}D García-Zelada: Concentration for Coulomb gases on
compact manifolds. Electronic Communications in Probability, 2019 

\bibitem{g-s}X.Gourdon;P.Sebah: The Riemann Zeta-function \textgreek{z}(s):generalities.
numbers.computation.free.fr/Constants/Miscellaneous/zetageneralities.pdf

\bibitem{g-p}Gr\"{ }af, M., Potts, D.: On the computation of spherical
designs by a new optimization approach based on fast spherical Fourier
transforms. Numer. Math.119(4), 699\textendash 724 (2011

\bibitem{key-5}D. P. Hardin and E. B. Saff: Discretizing manifolds
via minimum energy points. Notices Amer. Math. Soc.51(2004), no. 10,
1186\textendash 1194

\bibitem{Hi}Hirao, M: QMC designs and determinantal point processes.
In ``Monte Carlo and quasi\textendash Monte Carlo methods'', 331\textendash 343,
Springer Proc. Math. Stat., 241, Springer, Cham, 2018.

\bibitem{hl}E. Hlawka, Funktionen von beschrankter Variation in der
Theorie der Gleichverteilung, Ann, Mat Pura Appl. 54 (1961), 325-333

\bibitem{h-k-p}Hough, J. B.; Krishnapur, M.; Peres, Y.l; Virág, B:
Determinantal processes and independence. Probab. Surv. 3 (2006),
206\textendash 229

\bibitem{ko}156. J. F. Koksma, Een algemeene stelling uit de theorie
der gelijkmatige verdeeling modulo 1, Mathematica B (Zutphen) 11 (1942/43),
7-11.

\bibitem{kr}M, Krishnapur: From random matrices to random analytic
functions. Ann. Probab.37(2009), no. 1, 314\textendash 346, 

\bibitem{l-y}Li, P; Yau, S-T: On the parabolic kernel of the Schrödinger
operator. Acta Math. 156 (1986), no. 3-4, 153\textendash 201. 

\bibitem{l-r-s}Lubotzky, A., Phillips R., Sarnak, P.: Hecke operators
and distributing points on the sphere I, II, Comm. Pure Appl. Math.
39 (1986), 149-186, 40 (1987), 401-420.

\bibitem{ma}Massart, P: The Tight Constant in the Dvoretzky-Kiefer-Wolfowitz
Inequality. Ann. Probab. Volume 18, Number 3 (1990), 1269-1283.

\bibitem{mo}Morpurgo, C: The logarithmic Hardy-Littlewood-Sobolev
inequality and extremals of zeta functions on Sn. Geom. Funct. Anal.
6 (1996), no. 1, 146\textendash 171. 

\bibitem{n}Niederreiter, H: Quasi-Monte Carlo methods and pseudo-random
numbers. Bull. Amer. Math. Soc. 84 (1978), no. 6, 957\textendash 1041. 

\bibitem{r-v0}Rider, B; Virág, B: The noise in the circular law and
the Gaussian free field. Int. Math. Res. Not. IMRN 2007, no. 2,

\bibitem{R-S}N. Rougerie and S. Serfaty. Higher-dimensional Coulomb
gases and renormalized energy functionals. Comm. Pure Appl. Math.,
69(3):519\textendash 605, 2016. 

\bibitem{s-k}Saff, E.B., Kuijlaars, A.B.J.: Distributing many points
on a sphere. Math. Intell.19, 5\textendash 11 (1997)

\bibitem{sch}W. M. Schmidt: Irregularities of distribution. IV, Invent.
Math. 7 (1969), 55-82

\bibitem{she}Sheffield, S: Gaussian free fields for mathematicians.
Probab. Theory Related Fields 139 (2007), no. 3-4, 521\textendash 541.

\bibitem{sm}Smale S. 2000 Mathematical problems for the next century,
Mathematics: Frontiers and Perspectives 271\textendash 294, AMS Providence
RI

\bibitem{vo}Voros, A: Spectral functions, special functions and the
Selberg zeta function. Comm. Math. Phys. Volume 110, Number 3 (1987),
439-465.

\bibitem{w}Womersley, R. S: Efficient spherical designs with good
geometric properties. Contemporary computational mathematics\textemdash a
celebration of the 80th birthday of Ian Sloan. Vol. 1, 2, 1243\textendash 1285,
Springer, Cham, 2018. 
\end{thebibliography}
\end{document}